\newcommand{\ddp}[2]{\frac{\partial#1}{\partial#2}}
\newcommand{\dd}[2]{\frac{\mathrm{d}#1}{\mathrm{d}#2}}
\newcommand{\D}{\partial D}
\renewcommand{\S}{\mathcal{S}}
\newcommand{\K}{\mathcal{K}}
\newcommand{\outside}{\mathbb{R}^3\setminus \overline{D}}
\renewcommand*{\Re}{\operatorname{Re}}
\newcommand{\A}{\mathcal{A}}
\newcommand{\de}{\: \mathrm{d}}
\newcommand{\R}{\mathbb{R}}
\newcommand{\iu}{\mathrm{i}\mkern1mu}
\newcommand{\C}{\tilde{C}}
\newtheorem{theorem}{Theorem}
\newtheorem{lemma}[theorem]{Lemma}
\newtheorem{defn}[theorem]{Definition}
\newtheorem{remark}[theorem]{Remark}
\numberwithin{equation}{section}
\numberwithin{theorem}{section}
\title{Close-to-touching acoustic subwavelength resonators: eigenfrequency separation and gradient blow-up}
\date{}
\author{Habib Ammari\thanks{\footnotesize Department of Mathematics, ETH Z\"urich, R\"amistrasse 101, CH-8092 Z\"urich, Switzerland (habib.ammari@math.ethz.ch, bryn.davies@sam.math.ethz.ch).}\and Bryn Davies\footnotemark[1]  \and Sanghyeon Yu\thanks{Department of Mathematics, Korea University, Seoul 02841, S. Korea (sanghyeon\_yu@korea.ac.kr).}}
\begin{document}
\maketitle
\begin{abstract}
	In this paper, we study the behaviour of the coupled subwavelength resonant modes when two high-contrast acoustic resonators are brought close together. We consider the case of spherical resonators and use bispherical coordinates to derive explicit representations for the capacitance coefficients which, we show, capture the system's resonant behaviour at leading order. We prove that the pair of resonators has two subwavelength resonant modes whose frequencies have different leading-order asymptotic behaviour. We, also, derive estimates for the rate at which the gradient of the scattered pressure wave blows up as the resonators are brought together.
\end{abstract}
\vspace{0.5cm}
\noindent{\textbf{Mathematics Subject Classification (MSC2010):}} 35J05, 35C20, 35P20.
	
	\vspace{0.2cm}
	
	\noindent{\textbf{Keywords:}} subwavelength resonance, high-contrast metamaterials, bubbly media, bispherical coordinates, close-to-touching spheres, capacitance coefficients.
	
	\begin{center}\rule{0.5\textwidth}{0.4pt}\end{center}

\section{Introduction}

Subwavelength acoustic resonators are compressible objects that experience resonant phenomena in response to wavelengths significantly greater than their size. This behaviour relies on the resonators being constructed from a material that has greatly different material parameters to the background medium. The classical example is an air bubble in water, in which case the subwavelength resonant mode is known as the \emph{Minnaert resonance} \cite{minnaert1933musical, ammari2018minnaert, devaud2008minnaert}. Thanks to their ability to interact with waves on subwavelength scales, structures made from subwavelength resonators (a type of \emph{metamaterial}) have been used for a wide variety of wave-guiding applications \cite{leroy2009design, ammari2017double, kushwaha1998sound, ammari2017sub, davies2019fully, davies2020hopf, ammari2019topological}.

In this paper, we wish to examine how the resonant modes of a pair of spherical resonators behave as they are brought close together. We will see that the leading-order behaviour of the resonant modes is determined by the so-called \emph{capacitance coefficients} \cite{ammari2017double}. These are well known in the setting of electrostatics and can be calculated explicitly when the resonators are spherical. This was first realised by Maxwell, who derived the formula using the method of image charges, but we favour the approach conceived by Jeffrey, which relies on expanding solutions using bispherical coordinates \cite{jeffery1912form}.

The analysis performed here will rely on using layer potentials to represent solutions, enabling us to perform an asymptotic analysis in terms of the material contrast \cite{ammari2018minnaert, ammari2017double}. We will show that the two subwavelength resonant frequencies have different asymptotic behaviours, which can be neatly expressed if the separation distance is chosen as a function of the material contrast. This has significant implications for the design of acoustic metamaterials with multi-frequency or broadband functionality. We will then examine how the eigenmodes behave as the resonators are brought together. In particular, we study the extent to which the gradient of the acoustic pressure between the resonators blows up as they are brought together.

Similar analyses (of close-to-touching material inclusions) have been performed in several other settings. In the context of electrostatics \cite{poladian1989asymptotic,  mcphedran1988asymptotic, ammari2007optimal, yun2007estimates, lim2009blow,  bao2009gradient, lekner2011near, lim2014asymptotic, gorb2015singular, kim2019electric}  and linear elasticity \cite{kang2019quantitative, ammari2013spectral, bao2015gradient, bao2017gradient, lim2014asymptotic_elas}, it has been shown that the electric field or the stress field blows up as the two inclusions get closer, provided the material parameters of the inclusions are infinite or zero. However, these field enhancement phenomena are not related to resonances. 
%(as popularised by the microwave experiment with grapes \cite{khattak2019linking}). 
In the plasmonic case, where electromagnetic inclusions have negative permittivity and support subwavelength resonances called surface plasmons, the close-to-touching interaction has been studied in \cite{yu2019hybridization, yu2018plasmonic, bonnetier2013spectrum, mcphedran1981electrostatic,  pendry2012transformation, romero2006plasmons, khattak2019linking, hooshmand2019collective, ammari2019shape}. The subwavelength resonance studied in this article is a quite different phenomenon from the behaviour of surface plasmons. Firstly, we study structures with positive and highly contrasting material parameters and, secondly, high-contrast acoustic resonators exhibit both monopolar and dipolar resonances while plasmonic inclusions support only dipolar resonant modes.

Since many of the results concerning the resonant modes and frequencies can be expressed as concise formulas when the resonators are identical, we summarise the results for this special case in \Cref{sec:sym} before proving the general versions in \Cref{sec:asymm,sec:eigenmodes,sec:scattering}. Finally, in \Cref{sec:scattering} we demonstrate the value of these results by expressing the scattered solution in terms of the resonant modes.

\section{Preliminaries}
\subsection{Asymptotic notation}
We will use the following two pieces of notation in this work.
\begin{defn}
Let $f$ be a real- or complex-valued function and $g$ a real-valued function that is strictly positive in a neighbourhood of $x_0$. We write that
\begin{equation*}
    f(x)=O(g(x)) \text{ as } x\to x_0,
\end{equation*}
if and only if there exists some positive constant $M$ such that $|f(x)|\leq M g(x)$ for all $x$ such that $x-x_0$ is sufficiently small.
\end{defn}

\begin{defn}
Let $f$ and $g$ be real-valued functions which are strictly positive in a neighbourhood of $x_0$. We write that
\begin{equation*}
    f(x)\sim g(x) \text{ as } x\to x_0,
\end{equation*}
if and only if both $f(x)=O(g(x))$ and $g(x)=O(f(x))$ as $x\to x_0$.
\end{defn}

\subsection{Helmholtz formulation}

We study the Helmholtz problem which describes how a time-harmonic plane wave is scattered by the high-contrast structure. We consider a homogeneous background medium with density $\rho$ and bulk modulus $\kappa$. We study the effect of scattering by a pair of spherical inclusions, $D_1$ and $D_2$, with radii $r_1$ and $r_2$ and separation distance $\epsilon$ (so that their centres are separated by $r_1+r_2+\epsilon$). We use $\rho_b$ and $\kappa_b$ for the density and bulk modulus of the resonators' interior and introduce the auxiliary parameters
\begin{equation*}
v=\sqrt{\frac{\kappa}{\rho}}, \quad v_b=\sqrt{\frac{\kappa_b}{\rho_b}}, \quad k=\frac{\omega}{v}, \quad k_b=\frac{\omega}{v_b},
\end{equation*}
which are the wave speeds and wavenumbers in $\outside$ and in $D$, respectively. We, finally, introduce the two dimensionless contrast parameters
\begin{equation}
\delta = \frac{\rho_b}{\rho}, \quad \tau = \frac{k_b}{k} = \frac{v_b}{v} = \sqrt{\frac{\rho \kappa_b}{\rho_b \kappa}}.
\end{equation}

If we use the subscripts $+$ and $-$ to denote evaluation from outside and inside $\D$ respectively, then the acoustic pressure $u$ produced by the scattering of an incoming plane wave  $u^{in}$ satisfies
\begin{equation} \label{eq:helmholtz_equation}
\begin{cases}
\left( \Delta + k^2 \right) u = 0, & \text{in } \outside, \\
\left( \Delta + k_b^2 \right) u = 0, & \text{in } D, \\
u_+ - u_- = 0, & \text{on } \D,\\
\delta \ddp{u}{\nu}\big|_+ -  \ddp{u}{\nu}\big|_- = 0, & \text{on } \D, 
%\\u^s := u - u^{in} \text{ satisfies the SRC}, & \text{as } |x|\to\infty.
\end{cases}
\end{equation}
along with the Sommerfeld radiation condition, namely,
\begin{equation}
\left(\ddp{}{|x|} - i k\right) (u-u^{in})(x) = O(|x|^{-2}), \quad \mbox{as }|x|\rightarrow \infty.
\end{equation}
We assume that $v$, $v_b$, $\tau$, $r_1$ and $r_2$ are all $O(1)$. On the other hand, we assume that there is a large contrast between the densities, so that
\begin{equation}
0<\delta \ll 1.
\end{equation}
A classic example of material inclusions satisfying these assumptions is a collection of air bubbles in water, often known as Minnaert bubbles \cite{minnaert1933musical}, in which case we have $\delta\approx10^{-3}$.

We choose the separation distance $\epsilon$ as a function of $\delta$ and will perform an asymptotic analysis in terms of $\delta$. We choose $\epsilon$ to be such that, for some $0<\beta<1$,
\begin{equation} \label{assum_epsilon}
\epsilon\sim e^{-1/\delta^{1-\beta}} \text{ as } \delta\to0.
\end{equation}
As we will see shortly, with $\epsilon$ chosen to be in this regime the subwavelength resonant frequencies are both well behaved (\emph{i.e.} $\omega=\omega(\delta)\to0$ as $\delta\to0$) and we can compute asymptotic expansions in terms of $\delta$.

\subsection{Layer potentials}

Let $D\subset \R^3$ be the union of the two disjoint spheres $D_1$ and $D_2$. Let $G^k$  be the (outgoing) Helmholtz Green's function
\begin{equation} \label{eq:greens_fn}
G^k(x,y) := -\frac{e^{\iu k|x-y|}}{4\pi|x-y|}, \quad x,y \in \R^3, k\geq 0,
\end{equation}
and $\S_{D}^k: L^2(\partial D) \rightarrow H_{\textrm{loc}}^1(\R^3)$ the corresponding \emph{single layer potential} \cite{colton1983integral, ammari2009layer}, defined by
\begin{equation}
\S_D^k[\phi](x) := \int_{\D} G^k(x,y)\phi(y) \de \sigma(y), \quad x \in \R^3, \, \phi\in L^2(\partial D).
\end{equation}
Here, $H_{\textrm{loc}}^1(\R^3)$  is the usual Sobolev space. We also define the \emph{Neumann--Poincar\'e} operator $\K_D^{k,*}: L^2(\partial D) \rightarrow L^2(\D)$ by
\begin{equation}
\K_D^{k,*}[\phi](x) := \int_{\partial D} \frac{\partial }{\partial \nu_x}G^k(x,y) \phi(y) \de \sigma(y), \quad x \in \partial D,
\end{equation}
where $\partial/\partial \nu_x$ denotes the outward normal derivative at $x\in\D$.

The solutions to \eqref{eq:helmholtz_equation} can be represented as \cite{ammari2018mathematical}
\begin{equation} \label{eq:layer_potential_representation}
u = \begin{cases}
u^{in}(x)+\S_{D}^k[\psi](x), & x\in\outside,\\
\S_{D}^{k_b}[\phi](x), & x\in D,
\end{cases}
\end{equation} 
for some surface potentials $(\phi,\psi)\in L^2(\D)\times L^2(\D)$, which must be chosen so that $u$ satisfies the two transmission conditions across $\D$. This is equivalent to satisfying (see \emph{e.g.} \cite{ammari2004boundary, colton1983integral, ammari2018mathematical} for details)
\begin{equation} \label{eq:A_matrix_equation}
\A(\omega,\delta)\begin{pmatrix} \phi \\ \psi \end{pmatrix} = \begin{pmatrix} u^{in} \\ \delta\ddp{u^{in}}{\nu_x}\end{pmatrix},
\end{equation}
where
\begin{equation*} \label{eq:A_matrix_defn}
\A(\omega,\delta):=
\left[ {\begin{array}{cc}
	\S_D^{k_b} & -\S_D^k \\
	-\frac{1}{2}I+\K_D^{k_b,*} & -\delta(\frac{1}{2}I+\K_D^{k,*}) \\
	\end{array} } \right],
\end{equation*}
and $I$ is the identity operator on $L^2(\D)$.

Our analysis of \eqref{eq:A_matrix_equation} will be asymptotic using the fact that $\delta\ll1$, by assumption, and we are interested in subwavelength resonant modes for which $\omega\ll1$. Using the exponential power series we can derive an expansion for $\S_D^k$, given by
\begin{equation} \label{eq:S_series}
\S_D^k= \S_{D} + \sum_{n=1}^{\infty} k^n \S_{D,n},
\end{equation}
where, for $n=0,1,2,\dots$,
\begin{equation*}
\S_{D,n}[\phi](x):=-\frac{\iu^n}{4\pi n!} \int_{\D} |x-y|^{n-1}\phi(y) \de\sigma(y), \quad x \in \R^3, \, \phi\in L^2(\partial D),
\end{equation*}
and $\S_D:=\S_{D,0}$ is the Laplace single layer potential. It is well known that $\S_D: L^2(\D) \rightarrow H^1(\D)$ is invertible \cite{ammari2018mathematical}. Similarly, for $\K_D^{k,*}$ we have that 
\begin{equation} \label{eq:K_series}
\K_D^{k,*}=\K_D^*+\sum_{n=1}^{\infty} k^n\K_{D,n},
\end{equation}
where, for $n=0,1,2,\dots$,
\begin{equation*}
\K_{D,n}[\phi](x):=-\frac{\iu^n(n-1)}{4\pi n!} \int_{\D} |x-y|^{n-3}(x-y)\cdot \nu_x \,\phi(y) \de\sigma(y), \quad x \in \R^3, \, \phi\in L^2(\partial D),
\end{equation*}
and $\K_D^*:=\K_{D,0}$ is the Neumann--Poincar\'e operator corresponding to the Laplace equation.

The kernels of the integral operators $\S_{D,n}$ and $\K_{D,n}$ for $n\geq1$ are bounded as $|x-y|\to0$. Conversely, the kernels of $\S_D$ and $\K_D$ have singularities in the $\epsilon\to0$ limit. Thus, for small $k$, the leading order terms in \eqref{eq:S_series} and \eqref{eq:K_series} dominate even for small $\epsilon>0$. This allows us to write that, as $k,\epsilon\to0$, $\S_D^k= \S_{D} + O(k)$ and $\K_D^{k,*}= \K_{D}^* + O(k)$ in the relevant operator norms. This is made precise by the following lemma (\emph{cf.} \cite{colton1983integral, ammari2018mathematical}).

\begin{lemma} \label{lem:series_conv}
	The norms $\| \S_{D,n} \|_{\mathcal{B}(L^2(\D), H^1(\D))}$ and $\| \K_{D,n} \|_{\mathcal{B}(L^2(\D), L^2(\D))}$ are uniformly bounded for $n\geq1$ and $0<\epsilon\ll1$. Moreover, the series in \eqref{eq:S_series} and \eqref{eq:K_series} are uniformly convergent for $\epsilon>0$, in $\mathcal{B}(L^2(\D), H^1(\D))$ and $\mathcal{B}(L^2(\D), L^2(\D))$ respectively.
\end{lemma}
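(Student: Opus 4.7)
The plan is to exploit the fact that, unlike the leading terms $\S_D$ and $\K_D^*$, the kernels of $\S_{D,n}$ and $\K_{D,n}$ for $n\geq 1$ are \emph{bounded} functions on $\D\times\D$; combined with the observation that the surface measure of $\D = \partial D_1\cup\partial D_2$ and its diameter stay bounded uniformly in $\epsilon$, this will immediately give bounds on the operator norms which do not depend on $\epsilon$.

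First I would deal with $\S_{D,n}$ for $n\geq 1$. The kernel is $-\frac{\iu^n}{4\pi n!}|x-y|^{n-1}$, whose modulus is bounded by $R^{n-1}/(4\pi n!)$, where $R$ denotes an $\epsilon$-independent upper bound on the diameter of $\D$ (valid for all $0<\epsilon\ll 1$). Since $|\D| = 4\pi(r_1^2+r_2^2)$ is independent of $\epsilon$, Cauchy--Schwarz yields
\begin{equation*}
\|\S_{D,n}[\phi]\|_{L^2(\D)} \leq \frac{R^{n-1}|\D|}{4\pi n!}\|\phi\|_{L^2(\D)}.
\end{equation*}
To upgrade from $L^2(\D)$ to $H^1(\D)$ I would differentiate the kernel: $\nabla_x |x-y|^{n-1} = (n-1)|x-y|^{n-3}(x-y)$, whose modulus is $(n-1)|x-y|^{n-2}$, and this is bounded by $(n-1)R^{n-2}$ for $n\geq 2$ (the gradient vanishes for $n=1$). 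Restricting the gradient to the tangential directions on $\D$ and applying Cauchy--Schwarz again gives a uniform bound on the $H^1(\D)$-norm of the form $C\,R^{n-2}/(n-2)!$. Essentially the same argument handles $\K_{D,n}$: the prefactor $(n-1)$ kills the $n=1$ term, while for $n\geq 2$ the bound $|(x-y)\cdot\nu_x|\leq|x-y|$ shows the kernel is bounded by $|x-y|^{n-2}\leq R^{n-2}$.

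With the bounds $\|\S_{D,n}\|_{\mathcal{B}(L^2(\D),H^1(\D))}\leq C\,R^{n-1}/n!$ and $\|\K_{D,n}\|_{\mathcal{B}(L^2(\D),L^2(\D))}\leq C\,R^{n-1}/n!$ in hand, uniform convergence is immediate from the Weierstrass $M$-test, since $\sum_{n\geq 1}|k|^n R^{n-1}/n!<\infty$ and the bound is independent of $\epsilon$.

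The only genuinely delicate point will be verifying that the tangential derivative of $\S_{D,n}[\phi]$ on $\D$ really is controlled by the naive $x$-differentiation of the kernel under the integral sign. This is standard because, for $n\geq 1$, the kernel $|x-y|^{n-1}$ and its gradient are continuous on $\D\times\D$ (with any weak singularity killed by the positive power of $|x-y|$), so differentiation under the integral is justified and no jump relation or principal value enters. Once that is noted, the rest of the lemma is a direct consequence of the two $M_n = O(R^{n-1}/n!)$ estimates.
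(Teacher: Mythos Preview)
The paper does not actually prove this lemma; it merely states it with a \emph{cf.}\ citation to the standard monographs of Colton--Kress and Ammari.  Your argument is exactly the elementary one those references would supply: for $n\geq 1$ the kernels of $\S_{D,n}$ and $\K_{D,n}$ are bounded (indeed smooth) on $\D\times\D$, with pointwise bounds depending only on the diameter $R$ and the surface area $|\D|$ of $\D$.  Since $\D$ is the union of two \emph{fixed} spheres that are merely translated as $\epsilon$ varies, both $R$ and $|\D|$ are uniform in $\epsilon$, and the Schur/Cauchy--Schwarz estimate together with the Weierstrass $M$-test gives the result.  Your remark about differentiation under the integral sign is the right way to handle the $H^1$ part, and the observation that the intrinsic $H^1(\D)$ norm does not see the translation is implicitly what makes the whole argument $\epsilon$-uniform.

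One cosmetic slip: the combined $H^1$ bound you quote at the end, $\|\S_{D,n}\|\leq C R^{n-1}/n!$, is not quite consistent with the gradient estimate $C R^{n-2}/(n-2)!$ you derived a few lines earlier; the latter dominates for large $n$.  Either majorant yields a convergent series in $k$, so the conclusion is unaffected.
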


\subsection{Resonant frequencies}

In light of the representation \eqref{eq:layer_potential_representation}, we can define the notion of resonance to be the existence of a non-trivial solution when the incoming field $u^{in}$ is zero.

\begin{defn} \label{defn:resonance}
	For a fixed $\delta$ we define a resonant frequency (or eigenfrequency) to be $\omega\in\mathbb{C}$ such that there exists a non-trivial solution to
	\begin{equation} \label{eq:res}
	\A(\omega,\delta)
	\begin{pmatrix}
	\phi \\ \psi
	\end{pmatrix}
	=
	\begin{pmatrix}
	0 \\ 0
	\end{pmatrix},
	\end{equation}
	where $\A(\omega,\delta)$ is defined in \eqref{eq:A_matrix_defn}.
	For each resonant frequency $\omega$ we define the corresponding resonant mode (or eigenmode) as
	\begin{equation} \label{eq:eigenmode_representation}
	u = \begin{cases}
	\S_{D}^{k_b}[\phi](x), & x\in D, \\
	\S_{D}^k[\psi](x), & x\in\outside.
	\end{cases}
	\end{equation}
\end{defn}

\begin{remark}
The resonant modes \eqref{eq:eigenmode_representation} are determined only up to normalisation. In \Cref{sec:eigenmodes,sec:scattering} we will choose the normalisation to be such that $u_n\sim1$ on $\D$ for all small $\delta$ and $\epsilon$.
\end{remark}

\begin{defn}
	We define a subwavelength resonant frequency to be a resonant frequency $\omega=\omega(\delta)$ such that $\omega(0)=0$ and $\omega$ depends on $\delta$ continuously.
\end{defn}

\begin{lemma} \label{lem:two_modes}
	There exist two subwavelength resonant modes, $u_1$ and $u_2$, with associated resonant frequencies $\omega_1$ and $\omega_2$ with positive real part, labelled such that $\Re(\omega_1)<\Re(\omega_2)$.
\end{lemma}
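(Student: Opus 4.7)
The plan is to analyze the characteristic values of the operator-valued function $\omega \mapsto \A(\omega,\delta)$ perturbatively around $\omega=0$ and $\delta=0$, using the series expansions \eqref{eq:S_series} and \eqref{eq:K_series} established via Lemma \ref{lem:series_conv}.

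First I would identify the kernel of $\A(0,0)$. At the limiting values the operator reduces to
\begin{equation*}
\A(0,0) \;=\; \begin{pmatrix} \S_D & -\S_D \\ -\tfrac{1}{2}I + \K_D^{*} & 0 \end{pmatrix}.
\end{equation*}
Since $\S_D: L^2(\D)\to H^1(\D)$ is invertible, the first row forces $\phi=\psi$, and the second row then demands $\phi\in \ker(-\tfrac12 I + \K_D^*)$. For the disjoint two-sphere domain $D=D_1\cup D_2$, one checks that $\phi_j := \S_D^{-1}[\chi_{\partial D_j}]$ for $j=1,2$ both lie in this kernel (because $\S_D[\phi_j]$ is harmonic in $D$ and constant on each boundary component, so its interior normal derivative vanishes, which by the jump relations exhibits $\phi_j$ as a null vector), and a standard argument shows they span it. Hence $\dim\ker\A(0,0)=2$.

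Second, the map $(\omega,\delta)\mapsto \A(\omega,\delta)$ is analytic in both arguments in the appropriate operator norms, and $\A(0,0)$ is Fredholm of index zero. Applying the Gohberg--Sigal generalised Rouch\'e theorem to $\omega\mapsto \A(\omega,\delta)$ for sufficiently small $\delta>0$, one obtains exactly two characteristic values (counted with multiplicity) in a small neighbourhood of $\omega=0$. By construction these depend continuously on $\delta$ and converge to $0$ as $\delta\to 0$, giving, per \Cref{defn:resonance}, two subwavelength resonances $\omega_1(\delta)$ and $\omega_2(\delta)$.

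Third, I would establish positivity and distinctness of the real parts by a Lyapunov--Schmidt reduction. Substituting the expansions of $\S_D^k$ and $\K_D^{k,*}$ into \eqref{eq:res}, projecting onto $\ker\A(0,0)$, and using the natural ansatz $\omega\sim\sqrt{\delta}$, the leading-order resonance condition collapses to an eigenvalue problem for the $2\times 2$ capacitance matrix $C=(C_{ij})$ with $C_{ij}=-\int_{\partial D_i}\phi_j\,\de\sigma$. The matrix $C$ is real, symmetric, and positive definite (classical electrostatics), and its two eigenvalues $\lambda_1,\lambda_2>0$ are strictly distinct because the densities $\phi_1,\phi_2$ cannot be proportional (they induce different boundary data on $\partial D$), which prevents equality in the Cauchy--Schwarz inequality bounding the off-diagonal entry. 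At leading order $\omega_j^2$ is then a positive multiple of $\delta\lambda_j$, yielding two positive, distinct real parts, which we label so that $\Re(\omega_1)<\Re(\omega_2)$.

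The main obstacle is the third step in the joint asymptotic regime $\delta\to 0$ with $\epsilon=\epsilon(\delta)\to 0$ prescribed by \eqref{assum_epsilon}: as the spheres draw close, the off-diagonal entries of $C$ diverge, so one must carefully track how the two eigenvalues of $C$ separate and confirm that they remain simple throughout the regime, while uniformly controlling the higher-order error terms from the truncated series for $\S_D^k$ and $\K_D^{k,*}$. This uniform control is precisely what the explicit bispherical-coordinate representations derived in the body of the paper are designed to supply.
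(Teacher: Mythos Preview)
Your first two steps are essentially identical to the paper's own proof: compute $\ker\A(0,0)$ via the invertibility of $\S_D$, exhibit the basis $\{\S_D^{-1}[\chi_{\partial D_1}],\S_D^{-1}[\chi_{\partial D_2}]\}$ of $\ker(-\tfrac12 I+\K_D^*)$, and invoke Gohberg--Sigal. The paper's proof stops there.

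Your third step goes further than the paper does at this point. The paper does not, within this lemma, establish positivity or distinctness of the real parts; it simply records the labelling convention and defers the quantitative analysis to \Cref{lem:res_asym} and \Cref{thm:asym_eigenvalues}, where the leading-order formula $\omega_n=\sqrt{\delta v_b^2\lambda_n}+O(\delta)$ is derived. One imprecision in your sketch: the Lyapunov--Schmidt reduction yields an eigenvalue problem for the \emph{rescaled} capacitance matrix $\tilde C_{ij}=|D_i|^{-1}C_{ij}$ (see \eqref{eq:cap_rescaled}), not for $C$ itself. Since $\tilde C$ is similar to $\mathrm{diag}(|D_i|^{-1/2})\,C\,\mathrm{diag}(|D_i|^{-1/2})$, your positive-definiteness argument still gives $\lambda_n>0$, but the symmetry you invoke is not literally a property of the matrix that appears. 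Your final caveat about uniformity in the $\epsilon\to 0$ regime is apt, and indeed the paper handles the eigenvalue separation only later, via the explicit bispherical formulas in \Cref{lem:cap_coeffs} and \eqref{eq:cap_aymptotics}.
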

\begin{proof}
	Consider the operator corresponding to $\delta=0$ and $\omega=0$:
	\begin{equation*}
	\A(0,0)=\left[ {\begin{array}{cc}
		\S_D & -\S_D \\
		-\frac{1}{2}I+\K_D^{*} & 0 \\
		\end{array} } \right].
	\end{equation*}
	Since $\S_D$ is invertible, $\dim\ker\A(0,0)=\dim\ker\left(-\frac{1}{2}I+\K_D^{*}\right)$. We can show (\emph{e.g.} the arguments in Lemma~2.12 of \cite{davies2019fully}) that 
	\begin{equation*}
	\left\{\S_D^{-1}[\mathcal{X}_{\D_1}],\,\S_D^{-1}[\mathcal{X}_{\D_2}]\right\},
	\end{equation*}
	is a basis for $\ker\left(-\frac{1}{2}I+\K_D^{*}\right)$. Then, by the theory of Gohberg and Sigal \cite{gohberg2009holomorphic, ammari2009layer}, we have that there exist two subwavelength resonant modes $u_1$ and $u_2$, at leading order.
\end{proof}

\begin{remark}
	We will see, shortly, that each resonant mode has two resonant frequencies associated to it with real parts that differ in sign. We will use the notation $\omega_n$ to denote the resonant frequency associated to $u_n$ that has positive real part. As we will see in the proof of \Cref{lem:res_asym}, $-\Re(\omega_n)$ is also a resonant frequency associated to the mode $u_n$, up to an error of order $O(\delta)$.
\end{remark}

\subsection{Capacitance coefficients}

Let $\psi_1,\psi_2\in L^2(\D)$ be given by
\begin{equation} \label{eq:psi_defs}
\S_D[\psi_1]=\begin{cases}
1&\text{on } \D_1,\\
0&\text{on } \D_2,
\end{cases}
\qquad
\S_D[\psi_2]=\begin{cases}
0&\text{on } \D_1,\\
1&\text{on } \D_2.
\end{cases}
\end{equation}
We can show (as in the proof of \Cref{lem:two_modes}) that
\begin{equation} \label{kernel_basis}
\ker\left(-\frac{1}{2}I+\K_D^*\right)=\text{span}\{\psi_1,\psi_2\}.
\end{equation}
We then define the \emph{capacitance matrix} $C=(C_{ij})$ as
\begin{equation}
C_{ij}:=-\int_{\D_i} \psi_j \de \sigma, \quad i,j=1,2.
\end{equation}

\begin{figure}
    \centering
    \captionsetup{width=.6\linewidth}
	\includegraphics[width=.7\linewidth]{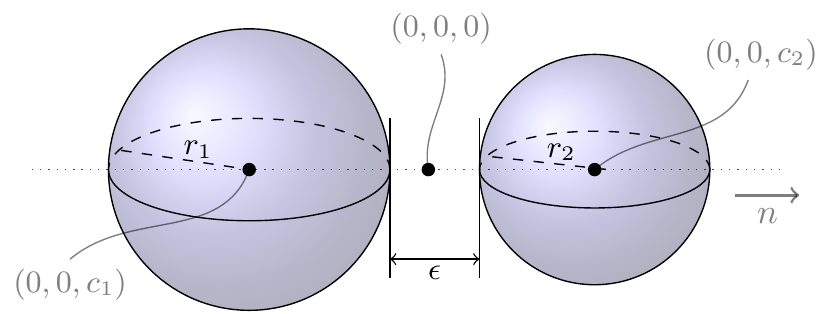}
    \caption{Two close-to-touching spheres, annotated with the coordinate system outlined in \Cref{sec:coordinates}.}
    \label{fig:coordinates}
\end{figure}

\subsection{Coordinate system} \label{sec:coordinates}

The Helmholtz problem \eqref{eq:helmholtz_equation} is invariant under translations and rotations so we are free to choose the coordinate axes. Let $R_j$ be the reflection with respect to $\partial D_j$ and let $p_1$ and $p_2$ be the unique fixed points of the combined reflections $R_1\circ R_2$ and $R_2\circ R_1$, respectively. Let $n$ be the unit vector in the direction of $p_2-p_1$. We will make use of the Cartesian coordinate system $(x_1,x_2,x_3)$ defined to be such that $p=(p_1+p_2)/2$ is the origin and the $x_3$-axis is parallel to the unit vector $n$. Then one can see that \cite{kang2019quantitative}
\begin{equation}
p_1 = (0,0,-\alpha)\quad \mbox{and} \quad p_2 = (0,0,\alpha),
\end{equation}
where
\begin{equation}
\alpha:=\frac{\sqrt{\epsilon(2r_1+\epsilon)(2r_2+\epsilon)(2r_1+2r_2+\epsilon)}}{2(r_1+r_2+\epsilon)}.
\end{equation}
Moreover, the sphere $D_i$ is centered at $(0,0,c_i)$ where
\begin{equation} \label{eq:centres}
c_i=(-1)^i \sqrt{r_i^2+\alpha^2}.
\end{equation}
This is depicted in \Cref{fig:coordinates}. This Cartesian coordinate system is chosen so that we can define a bispherical coordinate system \eqref{def:bispherical_coordinates} such that the boundaries of the two resonators are convenient level sets.

\section{The special case of identical spheres} \label{sec:sym}

In this section, we summarise the results in the case that the two spheres have the same radius, which we denote by $r$. These results are all special cases of those derived in the rest of this paper. Firstly, the resonant frequencies are given, in terms of the capacitance coefficients, by
\begin{equation} \label{eq:sym_resonances}
\begin{split}
	\omega_1&=\sqrt{\delta \frac{3v_b^2}{4\pi r^3}(C_{11}+C_{12})}+O(\delta),\\
	\omega_2&=\sqrt{\delta \frac{3v_b^2}{4\pi r^3}(C_{11}-C_{12})}+O(\delta).
\end{split}
\end{equation}
Further to this, since $D_1$ and $D_2$ are spherical we can derive explicit expressions for the capacitance coefficients. In the case that the resonators are identical, the capacitance coefficients are given by
\begin{equation} \label{eq:sym_cap}
\begin{split}
C_{11}&=C_{22}=8\pi\tilde{\alpha}\sum_{n=0}^{\infty} \frac{e^{(2n+1)\xi_0}}{e^{2(2n+1)\xi_0}-1}, \\
C_{12}&=C_{21}=-8\pi\tilde{\alpha}\sum_{n=0}^{\infty}\frac{1}{e^{2(2n+1)\xi_0}-1},
\end{split}
\end{equation}
where
\begin{equation*}
\tilde{\alpha}:=\sqrt{\epsilon(r+\epsilon/4)}, \qquad
\xi_0:=\sinh^{-1}\left(\frac{\tilde{\alpha}}{r}\right).
\end{equation*}
From \cite{lekner2011near}, we know the asymptotic behaviour of the series in \eqref{eq:sym_cap} as $\xi_0\to0$, from which we can see that as $\epsilon\to0$,
\begin{equation} \label{eq:cap_sym_asym}
\begin{split}
C_{11}&=2\pi \frac{\tilde{\alpha}}{\xi_0}\left[\gamma+2\log2+\log\left(\sqrt{r}\right)-\log\left(\sqrt{\epsilon}\right)\right]+O(\epsilon),\\
C_{12}&=-2\pi \frac{\tilde{\alpha}}{\xi_0}\left[\gamma+\log\left(\sqrt{r}\right)-\log\left(\sqrt{\epsilon}\right)\right]+O(\epsilon),
\end{split}
\end{equation}
where $\gamma\approx0.5772\dots$ is the Euler–Mascheroni constant.

Combining \eqref{eq:sym_resonances} and \eqref{eq:cap_sym_asym} we reach the fact that the resonant frequencies are given, as $\delta\to0$, by
\begin{equation} \label{eq:res_identical}
\begin{split}
\omega_1&=\sqrt{\delta \frac{3v_b^2\log2}{r^2}}+O\left(\delta\right),\\
\omega_2&=\sqrt{\delta \frac{3v_b^2}{2r^2} \left(\log\left(\frac{r}{\epsilon}\right)+2\gamma+2\log2\right)}+O\left(\sqrt{\delta}\right).
\end{split}
\end{equation}
Thus, the choice of $\epsilon\sim e^{-1/\delta^{1-\beta}}$, where $0<\beta<1$, means that as $\delta\to0$ we have that  $\omega_1\sim\sqrt{\delta}$ and $\omega_2\sim\delta^{\beta/2}$.

The two resonant modes, $u_1$ and $u_2$, correspond to the two resonators oscillating in phase and in antiphase with one another, respectively. Since the eigenmode $u_2$ has different signs on the two resonators, $\nabla u_2$ will blow up as the two resonators are brought together. Conversely, $u_1$ takes the same value on the two resonators so there will not be a singularity in the gradient. In particular, if we normalise the eigenmodes so that for any $x\in\D$
\begin{equation}
\lim_{\delta\to0} |u_1(x)|\sim1,\qquad \lim_{\delta\to0} |u_2(x)|\sim1,
\end{equation}
then the choice of $\epsilon$ to satisfy the regime $\epsilon\sim e^{-1/\delta^{1-\beta}}$ means that the maximal gradient of each eigenmode has the asymptotic behaviour, as $\delta\to0$,
\begin{equation} \label{eq:sym_mode_blowup}
\max_{x\in\outside}|\nabla u_1(x)|\sim 1, \qquad
\max_{x\in\outside}|\nabla u_2(x)|\sim \frac{1}{\epsilon}.
\end{equation}
By decomposing the scattered field into the two resonant modes, we can use \eqref{eq:sym_mode_blowup} to understand the singular behaviour exhibited by the acoustic pressure. The solution $u$ to the scattering problem \eqref{eq:helmholtz_equation} with incoming plane wave $u^{in}$ with frequency $\omega\ll1$ is given, for $x\in\outside$, by
\begin{equation}
u(x)=u^{in}(x)-\S_D^k\left[\S_D^{-1}[u^{in}]\right](x)+au_1(x)+bu_2(x)+O(\omega),
\end{equation}
where the coefficients $a$ and $b$ are given, as $\delta\to0$, by
\begin{align*}
a&= \frac{\delta}{\omega^2-\omega_1^2}\frac{v_b^2}{|D|}\int_{\D} \S_D^{-1}[ u^{in}] \de\sigma +O(\delta^{2-\beta}+\delta^{1-\beta}\omega^2+\omega^3),\\
b&= -\frac{\delta}{\omega^2-\omega_2^2}\frac{v_b^2}{|D|}\left(\int_{\D_1} \S_D^{-1}[ u^{in}] \de\sigma -\int_{\D_2} \S_D^{-1}[ u^{in}] \de \sigma\right) +O(\delta^{2-\beta}+\delta^{1-\beta}\omega^2+\omega^3),
\end{align*}
with $|D|$ being the volume of $D=D_1\cup D_2$.

\section{Resonant modes} \label{sec:asymm}

We now derive results analogous to those in \Cref{sec:sym} for the more general case where $D_1$ and $D_2$ are arbitrarily sized spheres with respective radii $r_1$ and $r_2$. We only require that $r_1,r_2=O(1)$. In the case of non-identical spheres it is convenient to define the \emph{rescaled capacitance matrix} $\tilde{C}=(\tilde{C}_{ij})$ as
\begin{equation} \label{eq:cap_rescaled}
\tilde{C}_{ij}:=\frac{1}{|D_i|}C_{ij},
\end{equation}
where $|D_i|=4\pi r_i^3/3$ is the volume of the sphere $D_i$. The resonant frequencies are determined by the eigenvalues of the rescaled capacitance matrix.

\begin{lemma} \label{lem:res_asym}
	The subwavelength resonant frequencies of two resonators $D_1$ and $D_2$ are given, as $\delta\to0$, for $n=1,2$, by
	\begin{equation*}
	\omega_n=\sqrt{\delta v_b^2 \lambda_n}+O(\delta),
	\end{equation*}
	where $\lambda_1$, $\lambda_2$ are the eigenvalues of the rescaled capacitance matrix $\tilde{C}$, defined in \eqref{eq:cap_rescaled}.
\end{lemma}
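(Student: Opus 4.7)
The plan is a Lyapunov--Schmidt reduction of the resonance equation $\A(\omega,\delta)(\phi,\psi)^T=0$ onto the two-dimensional kernel of $\A(0,0)$, which (from the proof of \Cref{lem:two_modes}) is spanned by $(\psi_1,\psi_1)^T$ and $(\psi_2,\psi_2)^T$. I would write $(\phi,\psi) = \sum_j\alpha_j(\psi_j,\psi_j)^T + (\phi_\perp,\psi_\perp)$ and substitute the expansions \eqref{eq:S_series}, \eqref{eq:K_series} (noting that $\K_{D,1}\equiv 0$ because of the $(n-1)$ factor in its kernel). The second row of the equation then reads
$$(-\tfrac{1}{2}I+\K_D^*)[\phi]+\frac{\omega^2}{v_b^2}\K_{D,2}[\phi]-\delta(\tfrac{1}{2}I+\K_D^*)[\psi] = O(\omega^3+\delta\omega^2),$$
and Gohberg--Sigal theory reduces the search for characteristic values to a $2\times 2$ system obtained by projecting onto the cokernel of $\A(0,0)$.

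That cokernel is spanned by $(0,\mathcal{X}_{\D_1})^T$ and $(0,\mathcal{X}_{\D_2})^T$, the adjoint counterpart of \eqref{kernel_basis} via $(-\tfrac{1}{2}I+\K_D)[\mathcal{X}_{\D_i}]=0$. Testing the displayed equation against $\mathcal{X}_{\D_i}$ kills the leading Laplace term and leaves
$$\frac{\omega^2}{v_b^2}\int_{\D_i}\K_{D,2}[\phi]\,d\sigma - \delta\int_{\D_i}(\tfrac{1}{2}I+\K_D^*)[\psi]\,d\sigma = O(\omega^3+\delta\omega^2).$$
On the leading ansatz $\phi=\psi=\sum_j\alpha_j\psi_j$ both integrals are explicit. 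Since $\psi_j\in\ker(-\tfrac{1}{2}I+\K_D^*)$ one has $(\tfrac{1}{2}I+\K_D^*)[\psi_j]=\psi_j$, so the capacitance term evaluates to $-\sum_j C_{ij}\alpha_j$ by the definition of $C$. For the mass term I would swap the order of integration, rewrite $(x-y)\cdot\nu_x/|x-y|=\partial_{\nu_x}|x-y|$, apply the divergence theorem with $\Delta_x|x-y|=2/|x-y|$ in $\R^3$, and use that $\S_D[\psi_j]$ is harmonic in $D_i$ with constant boundary trace $\delta_{ij}$ to obtain $\int_{\D_i}\K_{D,2}[\psi_j]\,d\sigma=-\delta_{ij}|D_i|$.

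Substituting both evaluations and dividing by $\delta|D_i|$ converts the projected equation, via \eqref{eq:cap_rescaled}, into $\tilde{C}\vec{\alpha} = (\omega^2/(\delta v_b^2))\vec{\alpha}$ plus lower-order corrections; hence $\omega_n^2 = \delta v_b^2\lambda_n + O(\delta^{3/2})$, and extracting the square root with positive real part yields $\omega_n = \sqrt{\delta v_b^2\lambda_n}+O(\delta)$. The main obstacle will be making this reduction rigorous in the close-to-touching regime: the first row of the system forces $\S_D[\phi-\psi]=O(\omega)$ so that $(\phi_\perp,\psi_\perp)$ is governed by the generalised inverse of $-\tfrac{1}{2}I+\K_D^*$, but the norms of both $\S_D^{-1}$ and that generalised inverse degenerate as $\epsilon\to 0$. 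Combining the uniform bounds of \Cref{lem:series_conv} with the chosen scaling $\epsilon\sim e^{-1/\delta^{1-\beta}}$ should ensure that the singular $\epsilon$-dependence is absorbed into the stated $O(\delta)$ error.
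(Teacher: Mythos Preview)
Your proposal is correct and follows essentially the same approach as the paper: both arguments project the second row of \eqref{eq:res} onto the cokernel of $-\tfrac{1}{2}I+\K_D^*$ (equivalently, integrate over $\partial D_i$ and use the identities \eqref{k_properties}), evaluate the $\K_{D,2}$ term via $\int_{\partial D_i}\K_{D,2}[\psi_j]\,d\sigma=-\int_{D_i}\S_D[\psi_j]\,dx=-\delta_{ij}|D_i|$, and reduce to the eigenvalue problem $\tilde{C}\vec{\alpha}=(\omega^2/\delta v_b^2)\vec{\alpha}$. The paper phrases the reduction slightly more informally (first deducing $\phi=\psi+O(\omega)$ from the top row and then writing $\psi=a_1\psi_1+a_2\psi_2+O(\omega^2+\delta)$) rather than as an explicit Lyapunov--Schmidt splitting, but the substance is identical.
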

\begin{proof}
Suppose that $(\phi,\psi)$ is a solution to \eqref{eq:res} for small $\omega=\omega(\delta)$. From the asymptotic expansions \eqref{eq:S_series} and \eqref{eq:K_series} we have that 
\begin{align} 
% \begin{split}
\S_D[\phi-\psi]+k_b\S_{D,1}[\phi]-k\S_{D,1}[\psi]&=O(\omega^2),\label{eq:first}\\
\left(-\frac{1}{2}I+\K_D^*+k_b^2\K_{D,2}\right)[\phi]-\delta\left(\frac{1}{2}I+\K_D^*\right)[\psi]&=O(\delta\omega+\omega^3) \label{eq:second}.
% \end{split}
\end{align}
From the first equation \eqref{eq:first} and the fact that $\S_D$ is invertible we can see that $\phi=\psi+O(\omega)$. We recall, \emph{e.g.} from Lemma~2.1 of \cite{ammari2017double}, that for any $\varphi\in L^2(\D)$ we have
\begin{equation} \label{k_properties}
\begin{split}
\int_{\D_i}\left(-\frac{1}{2}I+\K_D^{*}\right)[\varphi]\de\sigma=0,
\qquad&\int_{\D_i}\left(\frac{1}{2}I+\K_D^{*}\right)[\varphi]\de\sigma=\int_{\D_i}\varphi\de\sigma,\\
\int_{\D_i} \K_{D,2}[\varphi]\de\sigma&=-\int_{D_i}\S_D[\varphi]\de x,
\end{split}
\end{equation}
for $i=1,2$. Integrating \eqref{eq:second} over $\D_i$, for $i=1,2$, and using \eqref{k_properties} gives us that
\begin{equation} \label{eq:313}
-k_b^2\int_{D_i}\S_D[\psi]\de\sigma -\delta\int_{\D_i}\psi\de\sigma =O(\delta\omega+\omega^3).
\end{equation}

At leading order, \eqref{eq:second} says that $\left(-\frac{1}{2}I+\K_D^{*}\right)[\psi]=0$ so, in light of \eqref{kernel_basis}, the solution can be written as
\begin{equation} \label{eq:psi_basis}
\psi=a_1\psi_1+a_2\psi_2+O(\omega^2+\delta),
\end{equation}
for constants $a_1,a_2=O(1)$. Making this substitution into \eqref{eq:313} we reach, up to an error of order $O(\omega^3+\delta \omega)$, the eigenvalue problem
\begin{equation}
\tilde{C}\begin{pmatrix}a_1\\a_2\end{pmatrix}
=\frac{k_b^2}{\delta}\begin{pmatrix}a_1\\a_2\end{pmatrix}.
\end{equation}
\end{proof}

\begin{remark}
It is important, at this point, to highlight the fact that the resonant frequencies $\omega_1$ and $\omega_2$ are not real valued. Since we are studying resonators in an unbounded domain, energy is lost to the far field meaning that the resonant frequencies have negative imaginary parts \cite{ammari2018minnaert, ammari2017double, davies2019fully}. The leading order terms in the expansions for $\omega_1$ and $\omega_2$ (given in \Cref{lem:res_asym}) are real valued and the imaginary parts will appear in higher-order terms in the expansion. Since only the leading order terms in the asymptotic expansion \eqref{eq:S_series} and \eqref{eq:K_series} have singularities as the resonators are moved close together, it is not enlightening to study higher-order expansions in this work.
\end{remark}

By elementary linear algebra we have that the eigenvalues of $\C$ are given by
\begin{equation} \label{eq:eigenvalues}
\lambda_n=\frac{1}{2}\left(\C_{11}+\C_{22}+(-1)^n\sqrt{(\C_{11}-\C_{22})^2+4\C_{12}\C_{21}}\right),
\end{equation}
for $n=1,2$.  From \eqref{eq:eigenvalues}, finding the resonant frequencies (at leading order) has been reduced to finding expressions for the capacitance coefficients.

\begin{lemma} \label{lem:cap_coeffs}
	In the case that $D_1$ and $D_2$ are spheres of radius $r_1$ and $r_2$, respectively, and are separated by a distance $\epsilon$ the capacitance coefficients are given by
	\begin{equation*}
	C_{11}=8\pi\alpha\sum_{n=0}^{\infty} \frac{e^{(2n+1)\xi_2}}{e^{(2n+1)(\xi_1+\xi_2)}-1}, \qquad C_{22}=8\pi\alpha\sum_{n=0}^{\infty} \frac{e^{(2n+1)\xi_1}}{e^{(2n+1)(\xi_1+\xi_2)}-1},
	\end{equation*}
	\begin{equation*}
	C_{12}=C_{21}=-8\pi\alpha\sum_{n=0}^{\infty}\frac{1}{e^{(2n+1)(\xi_1+\xi_2)}-1},
	\end{equation*}
	where
	\begin{equation*}
	\xi_i:=\sinh^{-1}\left(\frac{\alpha}{r_i}\right).
	\end{equation*}
\end{lemma}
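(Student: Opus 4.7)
The plan is to use Jeffery's classical bispherical-coordinate method. First, reduce $C_{ij}$ to a boundary flux: the function $u_j := \S_D[\psi_j]$ is harmonic in $D$ with $u_j|_{\D_i} = \delta_{ij}$, so by the maximum principle $u_j \equiv \delta_{ij}$ in each $D_i$. The inner normal derivative is therefore zero, and the standard jump relation for single layer potentials gives $\psi_j = \partial u_j/\partial\nu|_+$ on $\D$, whence
\[ C_{ij} = -\int_{\D_i} \frac{\partial u_j}{\partial \nu}\bigg|_+ \de\sigma. \]

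Next, I would work in the bispherical coordinates $(\xi,\eta,\phi)$ adapted to the foci $p_1, p_2$ from \Cref{sec:coordinates}: in these coordinates $\D_1 = \{\xi = -\xi_1\}$, $\D_2 = \{\xi = \xi_2\}$, and $\outside$ is the strip $-\xi_1 < \xi < \xi_2$. Axisymmetric harmonic functions on this strip admit the separable expansion
\[ u_j = \sqrt{\cosh\xi - \cos\eta} \sum_{n=0}^\infty \bigl[A_n^{(j)} e^{(n+1/2)\xi} + B_n^{(j)} e^{-(n+1/2)\xi}\bigr] P_n(\cos\eta), \]
and the Heine identity $(\cosh\xi - \cos\eta)^{-1/2} = \sqrt{2}\sum_n e^{-(n+1/2)|\xi|} P_n(\cos\eta)$ converts the Dirichlet conditions at $\xi = -\xi_1, \xi_2$ into decoupled $2 \times 2$ systems indexed by $n$, which I would solve explicitly for $A_n^{(j)}, B_n^{(j)}$ in terms of exponentials in $\xi_1, \xi_2$.

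Finally, I would evaluate the boundary integral by combining this expansion with the bispherical metric: on $\xi = \xi_*$ one has $\de\sigma = \alpha^2\sin\eta\,(\cosh\xi_* - \cos\eta)^{-2}\de\eta\,\de\phi$ and the outward normal derivative from $D$ equals $\mp\alpha^{-1}(\cosh\xi_* - \cos\eta)\,\partial_\xi$, with the sign negative on $\D_2$ and positive on $\D_1$. The $\phi$-integral contributes $2\pi$, and the $\eta$-integral splits into an elementary piece coming from differentiating the $\sqrt{\cosh\xi - \cos\eta}$ prefactor (evaluated in closed form by the substitution $t = \cos\eta$) and a main piece reduced to a single series in $n$ by a second application of Heine together with Legendre orthogonality $\int_{-1}^1 P_m P_n \de t = \frac{2}{2n+1}\delta_{mn}$. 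The bookkeeping in this last step is the main obstacle: the two singular prefactors $(\cosh\xi_* - \cos\eta)^{-3/2}$ and $(\cosh\xi_* - \cos\eta)^{-1/2}$ must be handled separately, and the elementary piece must be expanded as the geometric series $1/\sinh\xi_* = 2\sum_n e^{-(2n+1)\xi_*}$ before it combines with the main series to yield the compact numerators $e^{(2n+1)\xi_1}, e^{(2n+1)\xi_2}$ in $C_{11}, C_{22}$ and the pure denominator in $C_{12}=C_{21}$; the symmetry $C_{12} = C_{21}$ drops out automatically from the calculation and also reflects the general symmetry of the capacitance matrix.
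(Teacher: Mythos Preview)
Your proposal is correct and follows essentially the same bispherical-coordinate argument as the paper: the same flux reformulation of $C_{ij}$, the same separable expansion for $V_j=\S_D[\psi_j]$, and the Heine identity to pin down the coefficients $A_n^{(j)},B_n^{(j)}$. The only organisational difference is in the last integral: rather than your geometric-series resummation of $1/\sinh\xi_*$, the paper quotes the closed forms of $\int_{-1}^1 P_n(s)(\cosh\xi-s)^{-1/2}\,ds$ and $\int_{-1}^1 P_n(s)(\cosh\xi-s)^{-3/2}\,ds$, which makes the flux of each mode $\sqrt{\cosh\xi-\cos\theta}\,e^{\pm(n+1/2)\xi}P_n(\cos\theta)$ through $\partial D_i$ collapse directly to $-8\pi\alpha\,\delta_{i2}$ (for the $+$ sign) or $-8\pi\alpha\,\delta_{i1}$ (for the $-$ sign), so that $C_{ij}=8\pi\alpha\bigl(\delta_{i2}\sum_n A_n^{(j)}+\delta_{i1}\sum_n B_n^{(j)}\bigr)$ without further bookkeeping.
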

\begin{proof}
Let $V_j:=\S_D[\psi_j]$ be defined as the extension of \eqref{eq:psi_defs} to all of $\outside$, for $j=1,2$. Then $V_j$ is the unique solution to the problem
\begin{equation} \label{eq:capacitance_equation}
\begin{cases}
\Delta V_j = 0, & \text{in } \outside, \\
V_j=\delta_{ij}, & \text{on } \D_i,\\
V_j(x)=O\left(\tfrac{1}{|x|}\right), & \text{as } |x|\to\infty.
\end{cases}
\end{equation}
By recalling the transmission conditions for the single layer potential on $\D$ \cite{ammari2018mathematical}, in particular the fact that for any $\varphi\in L^2(\D)$
\begin{equation*}
\ddp{}{\nu}\S_D[\varphi]|_\pm=\left(\pm\frac{1}{2} I+\K_D^*\right)[\varphi],
\end{equation*}
on $\D$ and using \eqref{k_properties} we can write the capacitance coefficients in the form
\begin{equation} \label{eq:cap_reform}
C_{ij}=-\int_{\D_i} \ddp{V_j}{\nu}\bigg|_+ \de \sigma, \quad i,j=1,2.
\end{equation}

We will find expressions for $V_i$ using bispherical coordinates. Recall the Cartesian coordinate system $(x_1,x_2,x_3)$ from \Cref{sec:coordinates}, which is such that $p_1=(0,0,-\alpha)$ and $p_2=(0,0,\alpha)$ are the fixed points of the combined reflections in $\D_1$ and $\D_2$, where $\alpha$ is given by
\begin{equation*}
\alpha=\frac{\sqrt{\epsilon(2r_1+\epsilon)(2r_2+\epsilon)(2r_1+2r_2+\epsilon)}}{2(r_1+r_2+\epsilon)}. %\qquad
%\xi_i:=\sinh^{-1}\left(\frac{\alpha}{r_i}\right).
\end{equation*}
We then introduce a bispherical coordinate system $(\xi,\theta,\varphi)$ which is related to $(x_1,x_2,x_3)$ by
\begin{equation} \label{def:bispherical_coordinates}
x_1=\frac{\alpha\sin\theta\cos\varphi}{\cosh\xi-\cos\theta}\,,\quad 
x_2=\frac{\alpha\sin\theta\sin\varphi}{\cosh\xi-\cos\theta}\,,\quad 
x_3=\frac{\alpha\sinh\xi}{\cosh\xi-\cos\theta}\,,
\end{equation}
and is chosen to satisfy $-\infty<\xi<\infty$, $0\leq\theta<\pi$ and $0\leq\varphi<2\pi$. The reason for this choice of coordinate system is that $\D_1$ and $\D_2$ are given by the level sets
\begin{equation}
\D_1=\{\xi=-\xi_1\},\qquad \D_2=\{\xi=\xi_2\},
\end{equation}
where $\xi_1$, $\xi_2$ are positive constants given by
\begin{equation}
\xi_j:=\sinh^{-1}\left(\frac{\alpha}{r_i}\right).
\end{equation}

We now show that 
\begin{equation} \label{eq:V_sol}
V_j(\xi,\theta,\varphi)=\sqrt{2}\sqrt{\cosh\xi-\cos\theta} \sum_{n=0}^{\infty}\left(A_n^j e^{(n+\frac{1}{2})\xi} + B_n^j e^{-(n+\frac{1}{2})\xi}\right)P_n(\cos\theta),
\end{equation}
where $P_n$ are the Legendre polynomials and
\begin{align*}
A_n^1&= \frac{1}{1-e^{(2n+1)(\xi_1+\xi_2)}},
&B_n^1&= -\frac{e^{(2n+1)\xi_2}}{1-e^{(2n+1)(\xi_1+\xi_2)}},\\
A_n^2&= -\frac{e^{(2n+1)\xi_1}}{1-e^{(2n+1)(\xi_1+\xi_2)}},
&B_n^2&= \frac{1}{1-e^{(2n+1)(\xi_1+\xi_2)}}.
\end{align*}
Since the solution to \eqref{eq:capacitance_equation} is unique, it suffices to check that \eqref{eq:V_sol} satisfies the three conditions. Firstly, it is well known that \eqref{eq:V_sol} is a harmonic function with the appropriate behaviour in the far field \cite{jeffery1912form, lekner2011near, moon2012field, lim2014asymptotic}. To check the values on the boundaries $\D_1$, $\D_2$ we recall that \cite{lekner2011near, jeffery1912form}
\begin{equation}
1=\sqrt{2}\sqrt{\cosh\xi-\cos\theta}\sum_{n=0}^{\infty}e^{-(n+\frac{1}{2})|\xi|}P_n(\cos\theta),
\end{equation}
hence the subsitution of $\xi=-\xi_1$ and $\xi=\xi_2$ into \eqref{eq:V_sol} yields
\begin{align*}
V_1|_{\D_1}&=V_1(-\xi_1,\theta,\varphi)=\sqrt{2}\sqrt{\cosh\xi_1-\cos\theta}\sum_{n=0}^{\infty}e^{-(n+\frac{1}{2})\xi_1}P_n(\cos\theta)=1,\\
V_1|_{\D_2}&=V_1(\xi_2,\theta,\varphi)%=\sqrt{2}\sqrt{\cosh\xi_2-\cos\theta}\sum_{n=0}^{\infty}e^{-(n+\frac{1}{2})\xi_2}P_n(\cos\theta)
=0,
\end{align*}
as well as similar results for $V_2$. Therefore, the solution to \eqref{eq:capacitance_equation} is given by \eqref{eq:V_sol}.

It remains to use the formula \eqref{eq:V_sol} for $V_j$ to calculate the capacitance coefficients through \eqref{eq:cap_reform}. We recall the identities \cite{moon2012field}
\begin{align}
\int_{-1}^1 \frac{P_n(s)}{\sqrt{\cosh\xi-s}}\de s &= \frac{2\sqrt{2}}{2n+1}e^{-(n+\frac{1}{2})|\xi|},\\
\int_{-1}^1 \frac{P_n(s)}{(\cosh\xi-s)^{3/2}}\de s &= \frac{2\sqrt{2}}{\sinh|\xi|}e^{-(n+\frac{1}{2})|\xi|},
\end{align}
from which we can show that
\begin{align}
\sqrt{2}\int_{\D_i} \partial_\nu \left(\sqrt{\cosh\xi-\cos\theta}\,e^{(n+\frac{1}{2})\xi}P_n(\cos\theta)\right)\de\sigma&=-8\pi \alpha \delta_{i2},\\
\sqrt{2}\int_{\D_i} \partial_\nu \left(\sqrt{\cosh\xi-\cos\theta}\,e^{-(n+\frac{1}{2})\xi}P_n(\cos\theta)\right)\de\sigma&=-8\pi \alpha \delta_{i1}.
\end{align}
Thus, integrating \eqref{eq:V_sol} over $\D_i$ gives
\begin{equation}
C_{ij}=8\pi \alpha \left(\delta_{i2} \sum_{n=0}^\infty A_n^j+ \delta_{i1} \sum_{n=0}^\infty B_n^j\right).
\end{equation}
\end{proof}

Using the results of \cite{lekner2011near}, we see from \Cref{lem:cap_coeffs} that the rescaled capacitance coefficients are given, at leading order, by
\begin{equation} \label{eq:cap_aymptotics}
\begin{split}
\tilde{C}_{11}&=\frac{3\alpha}{r_1^3(\xi_1+\xi_2)}\left[\log\left(\frac{2}{\xi_1+\xi_2}\right)-\psi\left(\frac{\xi_1}{\xi_1+\xi_2}\right)\right]+O(\sqrt{\epsilon}),\\
\tilde{C}_{22}&=\frac{3\alpha}{r_2^3(\xi_1+\xi_2)}\left[\log\left(\frac{2}{\xi_1+\xi_2}\right)-\psi\left(\frac{\xi_2}{\xi_1+\xi_2}\right)\right]+O(\sqrt{\epsilon}),\\
\tilde{C}_{12}&=-\frac{3\alpha}{r_1^3(\xi_1+\xi_2)}\left[\log\left(\frac{2}{\xi_1+\xi_2}\right)-\psi\left(1\right)\right]+O(\sqrt{\epsilon}),\\
\tilde{C}_{21}&=-\frac{3\alpha}{r_2^3(\xi_1+\xi_2)}\left[\log\left(\frac{2}{\xi_1+\xi_2}\right)-\psi\left(1\right)\right]+O(\sqrt{\epsilon}),
\end{split}
\end{equation}
where $\psi(z):=\dd{}{z}\log\Gamma(z)$ is the digamma function \cite{abramowitz1964handbook}, whose properties include $\psi(1)=-\gamma$ and $\psi(\frac{1}{2})=-\gamma-2\log2$. By combining \eqref{eq:cap_aymptotics} with \Cref{lem:res_asym} and the expression \eqref{eq:eigenvalues} we are able to find expressions for the resonant frequencies, at leading order.

\begin{theorem} \label{thm:asym_eigenvalues}
	The resonant frequencies of two spherical resonators with radii $r_1$, $r_2$ and separation distance $\epsilon$ are given by
	\begin{equation}
	\begin{split}
	\omega_1&\sim\sqrt{\delta},%=O\left(\sqrt{\delta}\right),%\sqrt{\delta \frac{3v_b^2\log2}{r^2}}+O\left(\sqrt{\delta\epsilon}\right),
	\\
	\omega_2&=\sqrt{\delta\frac{3 v_b^2}{2}\left(\frac{1}{r_1^3}+\frac{1}{r_2^3}\right)\frac{r_1r_2}{r_1+r_2}\log\left(\frac{2r_1r_2}{r_1+r_2}\frac{1}{\epsilon}\right)}+O\left(\sqrt{\delta}\right).
	\end{split}
	\end{equation}
	Again, the choice of $\epsilon\sim e^{-1/\delta^{1-\beta}}$, where $0<\beta<1$, means that as $\delta\to0$ we have that  $\omega_1\sim\sqrt{\delta}$ and $\omega_2\sim\delta^{\beta/2}$. 
\end{theorem}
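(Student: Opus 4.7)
The plan is to invoke \Cref{lem:res_asym} to reduce the problem to computing the eigenvalues of the rescaled capacitance matrix $\C$, then combine the closed-form asymptotics \eqref{eq:cap_aymptotics} for $\C_{ij}$ with the explicit eigenvalue formula \eqref{eq:eigenvalues}. The two main ingredients are the behaviour of the geometric parameters $\alpha$ and $\xi_i$ as $\epsilon\to0$ and a careful extraction of the leading order of the discriminant under the square root.

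First I would expand the bispherical-coordinate parameters: from the formula for $\alpha$ one reads off $\alpha\sim\sqrt{2\epsilon r_1r_2/(r_1+r_2)}$ as $\epsilon\to 0$, and since $\xi_i=\sinh^{-1}(\alpha/r_i)\sim \alpha/r_i$, one finds
\begin{equation*}
\xi_1+\xi_2\sim \alpha\,\frac{r_1+r_2}{r_1r_2}, \qquad \frac{\alpha}{\xi_1+\xi_2}\sim \frac{r_1r_2}{r_1+r_2}, \qquad \log\!\frac{2}{\xi_1+\xi_2}\sim \tfrac{1}{2}\log\!\left(\frac{2r_1r_2}{r_1+r_2}\cdot\frac{1}{\epsilon}\right).
\end{equation*}
Substituting these into \eqref{eq:cap_aymptotics} yields $\C_{ij}=A_i\bigl[L-\psi_{ij}\bigr]+O(\sqrt{\epsilon})$, where I abbreviate $A_i:=\tfrac{3\alpha}{r_i^3(\xi_1+\xi_2)}$, $L:=\log(2/(\xi_1+\xi_2))$, and $\psi_{ij}$ stands for the relevant digamma value appearing in \eqref{eq:cap_aymptotics} (so that $\C_{12}$ and $\C_{21}$ carry $\psi(1)=-\gamma$, while $\C_{ii}$ carries $\psi(\xi_i/(\xi_1+\xi_2))$).

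Next, I would plug these expressions into \eqref{eq:eigenvalues} and exploit the fact that, at leading order in $L$, the matrix $\C$ is rank one: its two rows are proportional to $(1/r_1^3,-1/r_1^3)$ and $(-1/r_2^3,1/r_2^3)$ up to the common factor $\frac{3\alpha}{\xi_1+\xi_2}L$. Consequently, the discriminant satisfies
\begin{equation*}
(\C_{11}-\C_{22})^2+4\C_{12}\C_{21}=(A_1+A_2)^2L^2-2L\,B+O(1),
\end{equation*}
for an explicit $O(1)$ constant $B$, so expanding the square root as $(A_1+A_2)L-B/(A_1+A_2)+O(L^{-1})$ leads to $\lambda_2=\C_{11}+\C_{22}-\lambda_1=(A_1+A_2)L+O(1)$ and to $\lambda_1=O(1)$, with the $\log L$ contribution cancelling. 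Using the simplifications above, the leading term becomes
\begin{equation*}
\lambda_2\sim \frac{3}{2}\!\left(\frac{1}{r_1^3}+\frac{1}{r_2^3}\right)\!\frac{r_1r_2}{r_1+r_2}\,\log\!\left(\frac{2r_1r_2}{r_1+r_2}\cdot\frac{1}{\epsilon}\right),
\end{equation*}
which, after multiplication by $\delta v_b^2$ and taking the square root, gives exactly the stated formula for $\omega_2$. The bound $\omega_1\sim\sqrt{\delta}$ follows from $\lambda_1=O(1)$ together with a verification that $\lambda_1$ is bounded below by a positive constant (one can read this off from the explicit $\psi$-values, since $\psi(t)+\psi(1-t)$ is bounded for $t\in(0,1)$).

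The main obstacle is bookkeeping the error in $\lambda_2$ well enough to produce the sharper $O(\sqrt{\delta})$ remainder in $\omega_2$ rather than the naive $O(\delta)$ inherited from \Cref{lem:res_asym}: since $\lambda_2\to\infty$ as $\epsilon\to0$, the identity $\sqrt{\delta v_b^2(\lambda_2+\eta)}=\sqrt{\delta v_b^2\lambda_2}+O(\sqrt{\delta}\,\eta/\lambda_2)$ shows that $O(1)$ errors in $\lambda_2$ are absorbed into an $O(\sqrt{\delta}/\sqrt{\log(1/\epsilon)})=o(\sqrt{\delta})$ correction, so the only delicate point is to ensure that in the regime $\epsilon\sim e^{-1/\delta^{1-\beta}}$ the $O(\delta)$ term from \Cref{lem:res_asym} dominates, giving the $O(\sqrt{\delta})$ remainder. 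The final claim about the scaling $\omega_2\sim \delta^{\beta/2}$ is then immediate from substituting $\log(1/\epsilon)\sim\delta^{-(1-\beta)}$ into the leading expression.
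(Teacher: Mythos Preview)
Your approach is essentially the paper's: invoke \Cref{lem:res_asym}, then expand the eigenvalues \eqref{eq:eigenvalues} using that the leading-$L$ part of $\C$ has rank one. The paper organises the algebra slightly differently, writing $\C_{12}=-\C_{11}+\sigma_1$ and $\C_{21}=-\C_{22}+\sigma_2$ with $\sigma_i\sim1$ (via the digamma series), which turns the discriminant directly into $(\C_{11}+\C_{22})^2-4\C_{11}\sigma_2-4\C_{22}\sigma_1+4\sigma_1\sigma_2$ and yields the clean limit $\lambda_1\to(r_1^3\sigma_1+r_2^3\sigma_2)/(r_1^3+r_2^3)$.

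One correction: your justification for $\lambda_1\gtrsim1$ does not work as written. The function $\psi(t)+\psi(1-t)$ is \emph{not} bounded on $(0,1)$ (it tends to $-\infty$ at both endpoints), and mere boundedness would in any case not give a positive lower bound. What your expansion actually produces is $\lambda_1=\frac{A_1A_2}{A_1+A_2}\bigl[2\psi(1)-\psi(t)-\psi(1-t)\bigr]+o(1)$ with $t=\xi_1/(\xi_1+\xi_2)\to r_2/(r_1+r_2)\in(0,1)$; positivity then follows from strict concavity of $\psi$ on $(0,\infty)$ (so $\psi(t)+\psi(1-t)\le 2\psi(1/2)<2\psi(1)$), or equivalently from the paper's observation that each $\sigma_i>0$.
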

\begin{proof}
We use a series expansion for the digamma function \cite{abramowitz1964handbook} to see that
\begin{equation*}
    \psi\left(\frac{\xi_i}{\xi_1+\xi_2}\right)=-\gamma-\sum_{n=1}^{\infty} \frac{z_i}{n(n-z_i)},
\end{equation*}
where $z_i=1-\xi_i/(\xi_1+\xi_2)$. Hence, we have that
\begin{equation} \label{eq:C_sig}
    \tilde{C}_{12}=-\tilde{C}_{11}+\sigma_1,\qquad
    \tilde{C}_{21}=-\tilde{C}_{22}+\sigma_2,
\end{equation}
where 
$$\sigma_i=\frac{3\alpha}{r_i^3(\xi_1+\xi_2)}\sum_{n=1}^{\infty} \frac{z_i}{n(n-z_i)}.$$
% \end{equation}
Note that $\sigma_i\sim1$ as $\delta\to0$. Therefore, the $\tilde{C}$ eigenvalues from \eqref{eq:eigenvalues} are given by
\begin{equation} \label{eq:evals}
\lambda_n=\frac{1}{2}\left(\C_{11}+\C_{22}+(-1)^n\sqrt{(\C_{11}+\C_{22})^2-4\C_{11}\sigma_2-4\C_{22}\sigma_1+4\sigma_1\sigma_2}\right).
\end{equation}
We can rewrite this as
\begin{equation} \label{eq:evals_expanded}
\lambda_n=\frac{1+(-1)^n}{2}(\C_{11}+\C_{22})+(-1)^{n+1}
\frac{\C_{11}\sigma_2+\C_{22}\sigma_1}{\C_{11}+\C_{22}}+O(\delta^{1-\beta}),%O\left(\frac{1}{\C_{11}+\C_{22}}\right).
\end{equation}
where we have used the fact that the choice of $\epsilon$ relative to $\delta$ means that $(\C_{11}+\C_{22})^{-1}=O(\delta^{1-\beta})$. 

The formula for $\omega_2$ follows from \eqref{eq:evals_expanded} by using the leading order behaviour of $\tilde{C}_{ij}$, given in \eqref{eq:cap_aymptotics}, combined with the expansions
\begin{equation*}
    \xi_i=\frac{1}{r_i}\sqrt{\frac{2r_1r_2}{r_1+r_2}}\sqrt{\epsilon}+O(\epsilon^{3/2}),
    \qquad \frac{\alpha}{\xi_1+\xi_2}=\frac{r_1r_2}{r_1+r_2}+O(\epsilon).
\end{equation*}
In the case of $\omega_1$, the leading order term in \eqref{eq:evals_expanded} vanishes so the result follows from the fact that
\begin{equation}
\lambda_1=\frac{\C_{11}\sigma_2+\C_{22}\sigma_1}{\C_{11}+\C_{22}}+O(\delta^{1-\beta})
=\frac{r_1^3\sigma_1+r_2^3\sigma_2}{r_1^3+r_2^3}+O(\delta^{1-\beta})\sim1,
\end{equation}
as $\delta\to0$.
\end{proof}

\begin{remark}
We can see that the case of identical resonators \eqref{eq:res_identical} follows from the proof of \Cref{thm:asym_eigenvalues} since if $r_1=r_2=r$ then $\xi_1=\xi_2$ hence $\sigma_1=\sigma_2$ which means that \eqref{eq:evals} says that
\begin{equation}
\lambda_1=\sigma_1=\frac{3\alpha}{2r^3\xi_1}\sum_{n=1}^{\infty} \frac{1/2}{n(n-1/2)}=\frac{3}{r^2}\log2+O(\epsilon).
    % \sum_{n=1}^{\infty} \frac{z_i}{n(n-z_i)}
\end{equation}
% so, since also $\C_{11}=\C_{22}$ \eqref{eq:evals} says that
% \begin{equation}
%     \lambda_1=\sigma_1
% \end{equation}
\end{remark}

\section{Eigenmode gradient blow-up} \label{sec:eigenmodes}

We are interested in studying how the solution behaves in the region between the two spheres. The eigenmodes are known to be approximately constant on each resonator. If these constant values are different then, as the two resonators are moved close together, the gradient of the field between them will blow up. We wish to quantify the extent to which this happens.

Recall the decomposition \eqref{eq:psi_basis} which allows us to write the eigenmodes in terms of $\S_D[\psi_1]$ and $\S_D[\psi_2]$, as defined in \eqref{eq:psi_defs}. From the fact that the eigenvector of $\C$ associated to the eigenvalue $\lambda_n$ (as in \eqref{eq:eigenvalues}) is given by
\begin{equation} \label{eq:eigenvectors}
\left(\frac{\lambda_n-\C_{22}}{\C_{21}},\,1\,\right),
\end{equation}
we see that the eigenmodes are given, for $n=1,2$, by
\begin{equation} \label{eq:mode_defn}
    u_n(x)=\S_D[\phi_n](x)+O(\delta^{\beta/2}),%O(\omega_n),
\end{equation}
where
\begin{equation} \label{eq:mode_densities}
    \phi_n:=\frac{\lambda_n-\C_{22}}{\C_{21}}\psi_1+\psi_2.
\end{equation}
By recalling the definition of the basis functions $\psi_1$ and $\psi_2$ \eqref{eq:psi_defs} we have that
\begin{equation} \label{eq:dvalue}
    u_n(x)=\begin{cases}
    \frac{\lambda_n-\C_{22}}{\C_{21}}+O(\delta^{\beta/2}), & x\in \D_1, \\
    1+O(\delta^{\beta/2}), & x\in \D_2.
    \end{cases}
\end{equation}
From the leading order behaviour of $\lambda_n$ \eqref{eq:evals_expanded} and of the capacitance coefficients \eqref{eq:cap_aymptotics} we have that, as $\delta\to0$,
\begin{equation} \label{eq:d1value}
    \frac{\lambda_n-\C_{22}}{\C_{21}}=\begin{cases}
    1+O(\delta^{1-\beta}), & n=1,\\
    -\frac{r_2^3}{r_1^3}+O(\delta^{1-\beta}), & n=2.
    \end{cases}
\end{equation}
Thus, we can show the following preliminary lemma.
\begin{lemma}
For sufficiently small $\delta>0$, $u_1|_{\D_1}$ and $u_1|_{\D_2}$ have the same sign whereas $u_2|_{\D_1}$ and $u_2|_{\D_2}$ have different signs.
\end{lemma}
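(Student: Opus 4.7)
The claim is an almost immediate consequence of the two displayed formulas \eqref{eq:dvalue} and \eqref{eq:d1value} that have already been established just above the statement, so the plan is simply to combine them and check that the error terms cannot flip the sign of the leading-order constant for sufficiently small $\delta$.

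First I would recall that, by \eqref{eq:dvalue}, the value of $u_n$ on $\partial D_2$ is $1+O(\delta^{\beta/2})$, which is strictly positive as soon as $\delta$ is small enough that the error term is, say, less than $1/2$ in absolute value. So both $u_1|_{\partial D_2}$ and $u_2|_{\partial D_2}$ are positive for small $\delta$, and the entire sign question reduces to the value on $\partial D_1$, which by \eqref{eq:dvalue} equals $(\lambda_n-\tilde{C}_{22})/\tilde{C}_{21}+O(\delta^{\beta/2})$.

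Next I would invoke \eqref{eq:d1value}: for $n=1$ this quantity is $1+O(\delta^{1-\beta})+O(\delta^{\beta/2})$, whose leading term is $+1$; for $n=2$ it is $-r_2^3/r_1^3+O(\delta^{1-\beta})+O(\delta^{\beta/2})$, whose leading term is a strictly negative constant since $r_1,r_2>0$ and, by hypothesis, $r_1,r_2=O(1)$ are bounded away from zero. In both cases the leading constant is nonzero and independent of $\delta$, while the correction is $o(1)$ as $\delta\to0$ (both $\delta^{1-\beta}$ and $\delta^{\beta/2}$ vanish since $0<\beta<1$), so for all sufficiently small $\delta$ the correction is strictly smaller in absolute value than the leading constant. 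Hence $u_1|_{\partial D_1}>0$ while $u_2|_{\partial D_1}<0$.

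Combining the two paragraphs: $u_1$ is positive on both $\partial D_1$ and $\partial D_2$, so the two restrictions share a sign, whereas $u_2$ is negative on $\partial D_1$ and positive on $\partial D_2$, so the two restrictions have opposite signs. No step is really an obstacle; the only thing to be slightly careful about is making sure the $O(\delta^{\beta/2})$ and $O(\delta^{1-\beta})$ error terms are genuinely $o(1)$, which uses $0<\beta<1$ from the choice of regime \eqref{assum_epsilon}, and that the leading constant $r_2^3/r_1^3$ is bounded away from zero, which follows from $r_1,r_2\sim 1$.
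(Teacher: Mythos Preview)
Your proposal is correct and follows essentially the same approach as the paper: the lemma is stated immediately after \eqref{eq:dvalue} and \eqref{eq:d1value} with the comment ``Thus, we can show the following preliminary lemma,'' and your argument simply makes explicit the sign comparison that the paper leaves implicit. There is no separate proof in the paper beyond these two preceding displays.
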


Further to this, from \eqref{eq:dvalue} and \eqref{eq:d1value} we know that the eigenmodes converge to constant, non-zero values as $\delta\to0$. Since $\epsilon=\epsilon(\delta)$ is chosen so that $\epsilon\to0$ as $\delta\to0$, if the two leading order values are different then the maximum of the gradient of the solution between the two resonators must blow up as $\delta\to0$.

\begin{theorem} \label{thm:gradient_bounds}
Let $u_1$ and $u_2$ denote the subwavelength eigenmodes for two spherical resonators (with radii $r_1$ and $r_2$) separated by a distance $\epsilon$ which are normalised such that for any $x\in\D$
    \begin{equation*}
        \lim_{\delta\to0} |u_1(x)|\sim1,\qquad \lim_{\delta\to0} |u_2(x)|\sim1.
    \end{equation*}
    Suppose that the distance $\epsilon$ satisfies $\epsilon\sim e^{-1/\delta^{1-\beta}}$, then the maximal gradient of each eigenmode has the asymptotic behaviour, as $\delta\to0$,
    \begin{equation*}
    \max_{x\in\outside}|\nabla u_1(x)|\sim
    \begin{cases}
    1, & \text{if } r_1=r_2,\\
    \frac{1}{\epsilon|\log\epsilon|}, & \text{otherwise},
    \end{cases}
\end{equation*}
    and
\begin{equation*}
    \max_{x\in\outside}|\nabla u_2(x)|\sim \frac{1}{\epsilon}.
\end{equation*}
\end{theorem}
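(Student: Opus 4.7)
My plan is to exploit the explicit harmonic representation of the leading-order eigenmode and then perform a symmetric/antisymmetric decomposition relative to the gap. From \eqref{eq:mode_defn} the eigenmode satisfies $u_n = \S_D[\phi_n] + O(\delta^{\beta/2})$, and $\S_D[\phi_n]$ is harmonic on $\outside$, decays at infinity, and takes the constant values from \eqref{eq:dvalue} on the two components of $\D$. Invoking uniqueness for the exterior Dirichlet problem together with the capacitance potentials $V_1,V_2$ of \eqref{eq:capacitance_equation}, I would rewrite
\[ u_n(x) = d_1^{(n)} V_1(x) + d_2^{(n)} V_2(x) + O(\delta^{\beta/2}), \quad x\in\outside, \]
where $d_1^{(n)} = (\lambda_n - \C_{22})/\C_{21}$ and $d_2^{(n)} = 1$ are read from \eqref{eq:dvalue}.

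The second step is to recast this in symmetric/antisymmetric form,
\[ u_n = \frac{d_1^{(n)} + d_2^{(n)}}{2}(V_1 + V_2) + \frac{d_1^{(n)} - d_2^{(n)}}{2}(V_1 - V_2) + O(\delta^{\beta/2}), \]
and identify the coefficients using \eqref{eq:d1value}. For $u_2$ the antisymmetric coefficient is $-\frac{1}{2}(r_2^3/r_1^3 + 1) \sim 1$. For $u_1$ with $r_1\neq r_2$, it equals $O(\delta^{1-\beta})$; since $\epsilon\sim e^{-1/\delta^{1-\beta}}$ forces $\delta^{1-\beta}\sim 1/|\log\epsilon|$, this is $\sim 1/|\log\epsilon|$ at leading order. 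For $u_1$ with $r_1 = r_2$, it vanishes exactly: the $x_3\mapsto -x_3$ reflection symmetry enforces $\C_{11} = \C_{22}$ and $\C_{12} = \C_{21}$, so $\lambda_1 - \C_{22} = \C_{12} = \C_{21}$, giving $d_1^{(1)} = d_2^{(1)} = 1$.

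The third step is to estimate $\nabla(V_1\pm V_2)$ separately. The function $V_1 + V_2$ is harmonic on $\outside$, equals $1$ on all of $\D$, and decays at infinity, so the maximum principle gives $0 < V_1 + V_2 \leq 1$ with no driving force across the gap; differentiating the bispherical expansion from the proof of \Cref{lem:cap_coeffs} (with boundary coefficients $A_n^1 + A_n^2$ and $B_n^1 + B_n^2$, for which the logarithmic factors cancel) yields the uniform bound $\|\nabla(V_1 + V_2)\|_{L^\infty(\outside)} \sim 1$. By contrast, $V_1 - V_2$ takes values $\pm 1$ on the two spheres, so the mean value theorem along the $x_3$-axis across the gap of width $\epsilon$ forces $|\partial_3(V_1 - V_2)| \gtrsim 1/\epsilon$ at some point in the gap, and a matching upper bound $\sim 1/\epsilon$ is again obtained by termwise differentiation of the bispherical series. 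Combining these estimates with the coefficients from the second step gives $\max|\nabla u_2| \sim 1/\epsilon$; $\max|\nabla u_1| \sim 1/(\epsilon|\log\epsilon|)$ for $r_1\neq r_2$; and $\max|\nabla u_1|\sim 1$ for $r_1 = r_2$.

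The main obstacle I anticipate is controlling the $O(\delta^{\beta/2})$ remainder in the layer-potential representation \emph{at the level of the gradient}: one must verify that the higher-order layer potentials $\S_{D,n},\K_{D,n}$ with $n\geq 1$, which have smoother kernels by \Cref{lem:series_conv}, contribute gradients that are dominated by the singular leading part in each of the three regimes. A second delicate point is the matching lower bound for $|\nabla u_1|$ in the non-identical case: it requires isolating the leading coefficient of $\delta^{1-\beta}$ in $d_1^{(1)} - d_2^{(1)}$ via \eqref{eq:evals_expanded} and \eqref{eq:cap_aymptotics}, and checking it is nonzero precisely when $r_1\neq r_2$, so that the antisymmetric component truly realises the claimed $1/(\epsilon|\log\epsilon|)$ growth and not merely bounds it from above.
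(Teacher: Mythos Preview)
Your argument is correct and follows a genuinely different, more elementary route than the paper's. The paper also writes $\S_D[\phi_n]=A_n h_1+B_n h_2$, where $h_1=V_1+V_2$ coincides with your symmetric part, but for the singular component it uses the \emph{flux-normalised} function $h_2$ (harmonic, constant on each sphere, $\int_{\D_i}\partial_\nu h_2|_+=(-1)^i$), whose gradient blow-up $\|\nabla h_2\|_{L^\infty}\sim 1/(\epsilon|\log\epsilon|)$ is imported from the existing literature on close-to-touching conductors. With that basis the coefficients are obtained by integrating $\partial_\nu\S_D[\phi_n]$ over each $\D_i$ and one finds $B_1\sim 1$ (or $B_1=0$ if $r_1=r_2$) and $B_2\sim|\log\epsilon|$, which combined with the $1/(\epsilon|\log\epsilon|)$ rate for $\nabla h_2$ yields the same conclusions. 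Your Dirichlet-normalised choice $V_1-V_2$ trades this for the more transparent mean-value-theorem bound $\|\nabla(V_1-V_2)\|_{L^\infty}\sim 1/\epsilon$, at the cost of having to track the $O(\delta^{1-\beta})\sim 1/|\log\epsilon|$ scale of the antisymmetric coefficient for $u_1$ directly from \eqref{eq:evals_expanded} and \eqref{eq:C_sig}; your identification of this as the delicate step is accurate, and the check succeeds since $(\lambda_1-\C_{22}-\C_{21})/\C_{21}=(\lambda_1-\sigma_2)/\C_{21}$ with $\lambda_1-\sigma_2=\frac{\C_{22}}{\C_{11}+\C_{22}}(\sigma_1-\sigma_2)+O(\delta^{1-\beta})$ nonzero precisely when $r_1\neq r_2$. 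The paper's approach has the advantage of outsourcing the sharp gradient estimate to established results, while yours is self-contained and makes the mechanism (boundary-value jump divided by gap width) explicit. Your concern about the $O(\delta^{\beta/2})$ remainder at the gradient level is legitimate but applies equally to the paper's proof, which also works only with the leading harmonic part $\S_D[\phi_n]$.
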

\begin{proof} We first remark that the desired normalisation of the eigenmodes is possible thanks to \eqref{eq:mode_defn}-\eqref{eq:d1value}. We prove the desired behaviour by decomposing the leading order expressions for the eigenmodes into two functions. The first, which does not have a singular gradient as $\epsilon\to0$, is defined as the solution to
\begin{equation} \label{eq:nonsingular_part}
\begin{cases}
\Delta h_1 = 0, & \text{in } \outside, \\
h_1=1, & \text{on } \D_1\cup\D_2,\\
h_1(x)=O\left(\tfrac{1}{|x|}\right), & \text{as } |x|\to\infty.
\end{cases}
\end{equation}
The fact that $\nabla h_1$ is bounded as $\epsilon\to0$ follows from the fact that $h_1|_{\D_1}=h_1|_{\D_2}$, \emph{e.g.} from Lemma~2.3 of \cite{bao2009gradient} or by applying the result of \cite{ammari2007estimates}.

For the singular part, we use a function that has been used in other settings, defined as the solution to
\begin{equation} \label{eq:singular_part}
\begin{cases}
\Delta h_2 = 0, & \text{in } \outside, \\
h_2=c_i, & \text{on } \D_i,\\
h_2(x)=O\left(\tfrac{1}{|x|}\right), & \text{as } |x|\to\infty,\\
\int_{\D_i} \ddp{h_2}{\nu}\big|_+ \de\sigma =(-1)^i,
\end{cases}
\end{equation}
for some constants $c_i$. We know, \emph{e.g.} from Theorems~1.1 and 1.2 of \cite{bao2009gradient} or from Proposition~5.3 of \cite{lim2014asymptotic} that
\begin{equation}
    \max_{x\in\outside} |\nabla h_2|\sim\frac{1}{\epsilon|\log\epsilon|} \text{ as } \delta\to0.
\end{equation}

We now wish to write the leading order term of \eqref{eq:mode_defn} in terms of $h_1$ and $h_2$, that is find $A_n$ and $B_n$ such that for all $x\in\outside$
\begin{equation} \label{eq:decomp_sing}
    \S_D[\phi_n](x)=\frac{\lambda_n-\C_{22}}{\C_{21}}\S_D[\psi_1](x)+\S_D[\psi_2](x)=A_n h_1(x) + B_n h_2(x),
\end{equation}
where $A_n$ and $B_n$ are constant with respect to $x$, but may depend on $\epsilon$. Differentiating \eqref{eq:decomp_sing} and integrating over $\D_1$ and $\D_2$, respectively, gives the equations
\begin{align}
    \left(\frac{\lambda_n-\C_{22}}{\C_{21}}-1\right)\C_{11}+\sigma_1&=A_n\sigma_1+B_n, \label{eq:D1}\\
    \lambda_n&=A_n\sigma_2-B_n,\label{eq:D2}
\end{align}
where we have used the fact that $h_1=\S_D[\psi_1+\psi_2]$, the representation \eqref{eq:cap_reform} for the capacitance coefficients and the notation $\sigma_i$ from \eqref{eq:C_sig}.

We can solve \eqref{eq:D1} and \eqref{eq:D2} for $A_n$ and $B_n$. We see, firstly, that 
\begin{equation} \label{eq:A_equation}
    \left(\frac{\lambda_n-\C_{22}}{\C_{21}}-1\right)\C_{11}+\sigma_1+ \lambda_n=A_n(\sigma_1+\sigma_2).
\end{equation}
From which, we can use \eqref{eq:d1value} as well as the fact that $\lambda_1=O(1)$ and $\sigma_1=O(1)$ to see that
\begin{equation} \label{eq:A1}
    A_1=O(1) \text{ as } \delta\to0.
\end{equation}
For the case where $n=2$, we can additionally use \eqref{eq:evals_expanded} to see that the left-hand side of \eqref{eq:A_equation} is given by
\begin{equation}
    \left(\frac{\lambda_2-\C_{22}}{\C_{21}}-1\right)\C_{11}+\sigma_1+ \lambda_2=-\frac{r_2^3}{r_1^3}\C_{11}+\C_{22}+O(1),
\end{equation}
thus, we have that
\begin{equation} \label{eq:A2_asym}
    r_1\neq r_2 \implies A_2\sim|\log\epsilon| \text{ as } \delta\to0.
\end{equation}
Conversely, if $r_1=r_2$ then $\lambda_2=\C_{22}-\C_{21}$ and hence
\begin{equation}
    \left(\frac{\lambda_2-\C_{22}}{\C_{21}}-1\right)\C_{11}+\sigma_1+ \lambda_2=-2\C_{22}+\sigma_2+\C_{22}-\C_{21}=0,
\end{equation}
so \eqref{eq:A_equation} gives that
\begin{equation} \label{eq:A2_sym}
    r_1= r_2 \implies A_2=0 \text{ as } \delta\to0.
\end{equation}

We can now use \eqref{eq:D2} to find $B_n$. The behaviour of $B_1$ is similar to that of $A_2$ in the sense that if $r_1=r_2$ then $\lambda_1=\sigma_1=\sigma_2$ and $A_1=1$ so \eqref{eq:D2} gives that
\begin{equation} \label{eq:B1_sym}
    r_1= r_2 \implies B_1=0 \text{ as } \delta\to0,
\end{equation}
whereas
\begin{equation}
    r_1\neq r_2 \implies B_1\sim1 \text{ as } \delta\to0.
\end{equation}
The case of $B_2$ is much simpler, since we always have that
\begin{equation}
    B_2\sim|\log\epsilon| \text{ as } \delta\to0.
\end{equation}

Finally, the result follows by combining the above results, namely the behaviour of the coefficients $A_n$ and $B_n$ and the estimates for $\nabla h_1$ and $\nabla h_2$.
\end{proof}

\section{Scattered solution} \label{sec:scattering}

We now wish to study the scattered field in response to an incoming plane wave $u^{in}$, writing the solution in terms of the subwavelength eigenmodes studied above.

\begin{theorem} \label{thm:scattering}
    Let $u_1$ and $u_2$ be the two subwavelength eigenmodes, normalised according to \eqref{eq:mode_defn} and \eqref{eq:mode_densities}. Then, the solution $u$ to the scattering problem \eqref{eq:helmholtz_equation} with incoming plane wave $u^{in}$ with frequency $\omega$ is given, for $x\in\outside$, by
    \begin{equation*}
        u(x)=u^{in}(x)-\S_D^k\left[\S_D^{-1}[u^{in}]\right](x)+au_1(x)+bu_2(x)+O(\omega),
    \end{equation*}
    where the coefficients $a$ and $b$ are given, as $\delta,\omega\to0$, by
    \begin{align*}
        a&= \frac{\delta}{\omega^2-\omega_1^2}\frac{v_b^2}{|D|}\int_{\D} \S_D^{-1}[u^{in}] \de\sigma+O(\delta^{2-\beta}+\delta^{1-\beta}\omega^2+\omega^3),\\
        b&= -\frac{\delta}{\omega^2-\omega_2^2}\frac{v_b^2}{|D|}\left(\int_{\D_1} \S_D^{-1}[u^{in}] \de\sigma-\frac{|D_1|}{|D_2|}\int_{\D_2} \S_D^{-1}[u^{in}] \de\sigma \right) + O(\delta^{2-\beta}+\delta^{1-\beta}\omega^2+\omega^3).
    \end{align*}
\end{theorem}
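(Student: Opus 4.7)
The plan is to solve the scattering system \eqref{eq:A_matrix_equation} asymptotically for $(\phi,\psi)$ in terms of $u^{in}$ and then recognise the result in the eigenmode basis provided by \eqref{eq:mode_defn}--\eqref{eq:mode_densities}. Using the series \eqref{eq:S_series} and \eqref{eq:K_series}, the first row of \eqref{eq:A_matrix_equation} reads $\S_D[\phi-\psi]=u^{in}+O(\omega^2)$ and hence $\phi=\psi+\S_D^{-1}[u^{in}]+O(\omega^2)$. The leading part of the second row gives $(-\tfrac12 I+\K_D^*)[\phi]=O(\omega^2+\delta)$, so by \eqref{kernel_basis} I would parametrise $\phi=a_1\psi_1+a_2\psi_2+O(\omega^2+\delta)$ for scalars $a_1,a_2$ depending on $\omega,\delta$ and $u^{in}$, which yields the decomposition $\psi=-\S_D^{-1}[u^{in}]+a_1\psi_1+a_2\psi_2+O(\omega^2+\delta)$.

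To determine $a_1,a_2$, I would integrate the second row of \eqref{eq:A_matrix_equation} over each $\D_i$ and invoke the identities \eqref{k_properties}. The $\K$-integrals collapse onto $-k_b^2\int_{D_i}\S_D[\phi]\de x$; since $\S_D[\phi]$ is harmonic in $D_i$ with boundary value $a_i+O(\omega^2+\delta)$ by the defining property \eqref{eq:psi_defs}, one has $\int_{D_i}\S_D[\phi]\de x=a_i|D_i|+O(\omega^2+\delta)$. Likewise $\int_{\D_i}\psi\de\sigma=-(Ca)_i-\int_{\D_i}\S_D^{-1}[u^{in}]\de\sigma+O(\omega^2+\delta)$, while the inhomogeneous right-hand side yields $\delta\int_{\D_i}\partial u^{in}/\partial\nu\de\sigma=-\delta k^2\int_{D_i}u^{in}\de x=O(\delta\omega^2)$. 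Collecting these and dividing through by $|D_i|$ produces the $2\times 2$ system $(k_b^2 I-\delta\C)a=\delta\vec g+O(\delta^2+\delta\omega^2+\omega^3)$, where $\vec g_i:=|D_i|^{-1}\int_{\D_i}\S_D^{-1}[u^{in}]\de\sigma$.

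To invert this system I would expand in the eigenvector basis of $\C$ given by \eqref{eq:eigenvectors}. Since $k_b^2-\delta\lambda_n=(\omega^2-\omega_n^2)/v_b^2$ at leading order by \Cref{lem:res_asym}, the coefficient along the $n$-th eigenvector becomes $\alpha_n=\delta v_b^2\beta_n/(\omega^2-\omega_n^2)$, where $\beta_n$ is the $n$-th coefficient of $\vec g$ in that basis. Using the leading-order eigenvector values from \eqref{eq:d1value}, namely $(1,1)^T$ and $(-r_2^3/r_1^3,1)^T$, a short algebraic computation identifies $\beta_1=|D|^{-1}\int_{\D}\S_D^{-1}[u^{in}]\de\sigma$ and $\beta_2=|D|^{-1}\bigl(-\int_{\D_1}\S_D^{-1}[u^{in}]\de\sigma+\tfrac{|D_1|}{|D_2|}\int_{\D_2}\S_D^{-1}[u^{in}]\de\sigma\bigr)$. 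Inserting into $u=u^{in}+\S_D^k[\psi]$ and using $\S_D^k[\psi_j]=V_j+O(\omega)$ with $V_j:=\S_D[\psi_j]$, the identity $\alpha_1 V_1+\alpha_2 V_2=\alpha_1 u_1+\alpha_2 u_2$ at leading order (from \eqref{eq:mode_defn}--\eqref{eq:mode_densities}) yields the claimed formula with $a=\alpha_1$ and $b=\alpha_2$.

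The main obstacle is the careful bookkeeping of errors after dividing by $\omega^2-\omega_n^2$, since the assumption $\epsilon\sim e^{-1/\delta^{1-\beta}}$ together with \Cref{thm:asym_eigenvalues} forces $\omega_n^2$ to scale like $\delta$ or $\delta^\beta$. The $O(\delta^2+\delta\omega^2+\omega^3)$ errors in the linear system then produce exactly the advertised $O(\delta^{2-\beta}+\delta^{1-\beta}\omega^2+\omega^3)$ after inversion, provided one verifies that the subleading corrections in \eqref{eq:d1value} and in $\beta_n$ contribute only at this level. A secondary subtlety arises in the final step, where $\alpha_1 V_1+\alpha_2 V_2$ is replaced by $\alpha_1 u_1+\alpha_2 u_2$: the $O(\delta^{\beta/2})$ correction from \eqref{eq:mode_defn} is amplified by the potentially large coefficients $\alpha_n$, but because the leading-order coefficients $q_n,1$ of $V_1,V_2$ in $u_n$ match those used to define $\alpha_n$, this mismatch is absorbed into the overall $O(\omega)$ remainder.
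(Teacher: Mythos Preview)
Your proposal is correct and follows essentially the same approach as the paper: expand \eqref{eq:A_matrix_equation} via \eqref{eq:S_series}--\eqref{eq:K_series}, project the leading part of $\phi$ onto $\ker(-\tfrac12 I+\K_D^*)$, integrate the second row over boundary components using \eqref{k_properties}, and solve the resulting $2\times2$ system. The only cosmetic difference is bookkeeping: you work in the $\{\psi_1,\psi_2\}$ basis, integrate over $\D_1$ and $\D_2$ separately, and then diagonalise $\C$, whereas the paper decomposes $\phi$ directly in the eigenmode basis $\{\phi_1,\phi_2\}$ and integrates over $\D$ and $\D_1-\D_2$ (together with the resonance identity \eqref{eq:313} at $\omega=\omega_n$) to decouple $a$ and $b$ from the outset; both routes yield the same coefficients and error terms.
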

\begin{proof}
If $(\phi,\psi)$ solves the scattering problem \eqref{eq:A_matrix_equation} then using the asymptotic expansions \eqref{eq:S_series} and \eqref{eq:K_series} we see that
\begin{align}
\S_D[\phi-\psi]&=u^{in}+O(\omega),\label{eq:61}\\
\left(-\frac{1}{2}I+\K_D^*+k_b^2\K_{D,2}\right)[\phi]-\delta\left(\frac{1}{2}I+\K_D^*\right)[\psi]&=O(\delta\omega+\omega^3).\label{eq:62}
\end{align}
From \eqref{eq:61}, we know that
\begin{equation}
    \psi=\phi-\S_D^{-1}[u^{in}]+O(\omega),
\end{equation}
so are able to write that
\begin{equation} \label{eq:4.6}
    \left(-\frac{1}{2}I+\K_D^*\right)[\phi]+k_b^2\K_{D,2}[\phi]-\delta\left(\frac{1}{2}I+\K_D^*\right)[\phi]=-\delta\left(\frac{1}{2}I+\K_D^*\right)\S_D^{-1}[u^{in}]
    +O(\delta\omega+\omega^3).
\end{equation}

We can make the decomposition
\begin{equation} \label{eq:decomp}
\phi=a\phi_1+b\phi_2+\phi_3,
\end{equation}
for constants $a,b=O(1)$, where $\phi_1$ and $\phi_2$ are the densities corresponding to the two subwavelength eigenmodes, defined in \eqref{eq:mode_densities}, and $\phi_3\in L^2(\D)$ is orthogonal to both $\phi_1$ and $\phi_2$ in $L^2(\D)$. We can see that 
$\|\phi_3\|_{L^2(\D)}=O(\delta+\omega^2)$ (\emph{cf.} Theorem~4.2 of \cite{ammari2017double}).

If we use the decomposition \eqref{eq:decomp} and integrate \eqref{eq:4.6} over $\D$, then the properties \eqref{k_properties} give us the equation
\begin{align} \label{eq:Dint}
-k_b^2\int_{D}\S_D[a\phi_1+b\phi_2]\de x-\delta\int_{\D}a\phi_1+b\phi_2\de\sigma=-\delta\int_{\D}\S_{D}^{-1}[u^{in}]\de\sigma +O(\delta\omega+\omega^3).
\end{align}

Recall that $\phi_1$ and $\phi_2$ are defined such that \eqref{eq:313} is satisfied exactly when $\omega$ is equal to the corresponding resonant frequency. Therefore, we have that
\begin{align} \label{eq:resonance_property}
-\frac{a\omega_1^2}{v_b^2}\int_{D}\S_D[\phi_1]\de x-\frac{b\omega_2^2}{v_b^2}\int_{D}\S_D[\phi_2]\de x-\delta\int_{\D}a\phi_1+b\phi_2\de\sigma=O(\delta\omega+\omega^3).
\end{align}
From \eqref{eq:d1value} we can show that
\begin{equation} 
    \int_D \S_D[\phi_n]\de x=\frac{\lambda_n-\C_{22}}{\C_{21}}|D_1|+|D_2|=\begin{cases} |D|+O(\delta^{1-\beta}),&n=1,\\ O(\delta^{1-\beta}), & n=2.\end{cases}
\end{equation}
Then, subtracting \eqref{eq:resonance_property} from \eqref{eq:Dint} we reach
\begin{equation}
    a\frac{\omega_1^2-\omega^2}{v_b^2}|D|%+O(\delta^\beta)+O(\delta^{1-\beta}))+O(\delta^{1-\beta}\omega^2)-\delta\int_{\D} \phi_3 
    = -\delta%\frac{\omega^2}{v_b^2}
    \int_{\D}\S_{D}^{-1}[u^{in}]\de\sigma +O(\delta^{2-\beta}+\delta^{1-\beta}\omega^2+\omega^3), %+O(\delta^{\hat{\beta}}\omega^2),
\end{equation}
which can be solved to give the formula for $a$. The formula for $b$ can be found by repeating these steps but instead integrating \eqref{eq:4.6} over $\D_1-\D_2$ and using the fact that
\begin{equation} 
    \int_{D_1-D_2} \S_D[\phi_n]\de x=\frac{\lambda_n-\C_{22}}{\C_{21}}|D_1|-|D_2|=\begin{cases} |D_1|-|D_2|+O(\delta^{1-\beta}),&n=1,\\ -2|D_2|+O(\delta^{1-\beta}), & n=2.\end{cases}
\end{equation}
This gives the equation
\begin{equation}
    a\frac{\omega_1^2-\omega^2}{v_b^2}(|D_1|-|D_2|)+b\frac{\omega_2^2-\omega^2}{v_b^2}(-2|D_2|) = -\delta 
    \int_{\D_1-\D_2} \S_{D}^{-1}[u^{in}] \de\sigma +O(\delta^{2-\beta}+\delta^{1-\beta}\omega^2+\omega^3),
\end{equation}
which can be solved to give the formula for $b$.
\end{proof}

\begin{remark}
It is also important to understand how the term $\S_D^k\left[\S_D^{-1}[u^{in}]\right](x)$ behaves, for $x\in\outside$, as $\epsilon\to0$. We have that
\begin{equation*}
\S_D^k\left[\S_D^{-1}[u^{in}]\right](x)=\S_D\left[\S_D^{-1}[u^{in}(0)]\right](x)+O(\omega),
\end{equation*}
and are able to write that $\S_D\left[\S_D^{-1}[u^{in}(0)]\right]=u^{in}(0)(V_1+V_2)$, as defined in \eqref{eq:V_sol}. From which we can show, in particular, that $\S_D^k\left[\S_D^{-1}[u^{in}]\right](x)$ is bounded as $\epsilon\to0$.
\end{remark}

\section{Concluding remarks}

Structures composed of subwavelength resonators have been shown to have remarkable wave-guiding abilities. In this paper, we have conducted an asymptotic analysis of the behaviour of two subwavelength resonators that are close to touching. We have shown that the two subwavelength resonant frequencies have different asymptotic behaviour and have derived estimates for the rate at which the gradient of each eigenmode blows up, accounting for the differences between symmetric and non-symmetric structures.

We have studied the case of spherical resonators in this work, but this could be generalised to shapes that are strictly convex in a region of the close-to-touching points. This relies on using spheres with the same curvature to approximate the structure, as has been done in the setting of antiplane elasticity \cite{ammari2013spectral} and full linear elasticity \cite{kang2019quantitative}.

Understanding the different asymptotic behaviour of the two eigenfrequencies is useful if one wants to design structures for specific applications. For example, one might want to construct an array that responds to a specific range of frequencies \cite{davies2019fully, davies2020hopf} or a structure that has subwavelength band gaps \cite{ammari2019topological}. In addition, the estimates for the blow-up of the gradient of the eigenmodes are valuable since the gradient of the acoustic pressure describes the forces that the resonators exert on one another in the presence of sound waves. Known as the secondary Bjerknes forces \cite{bjerknes1906fields, crum1975bjerknes, lanoy2015manipulating, mettin1997bjerknes, pandey2019asymmetricity}, this work provides an approach to understanding these forces in the case of close-to-touching bubbles.

\section*{Acknowledgement}

We are grateful to Erik Orvehed Hiltunen for their insightful comments during discussions about this work.

\bibliographystyle{abbrv}
%\bibliography{myacousticbib.bib}
\bibliography{/scratch/users/bdavies/Documents/myacousticbib.bib}
\end{document}